\documentclass[12pt]{article}
\usepackage{e-jc}
\usepackage{amsmath}
\usepackage{amsthm}
\usepackage{amssymb}
\usepackage[xcolor=pst]{pstricks}
\usepackage{graphicx, pst-plot, pst-node, pst-text, pst-tree}
\usepackage{titlefoot}
\usepackage[small,it]{caption}
\usepackage[hang,flushmargin,symbol,multiple]{footmisc} 
\usepackage{mathdots} 
\setlength{\captionmargin}{0.4in}
\setlength{\abovecaptionskip}{0pt}

\usepackage{color}
\definecolor{lightgray}{rgb}{0.8, 0.8, 0.8}
\definecolor{darkgray}{rgb}{0.7, 0.7, 0.7}
\definecolor{darkblue}{rgb}{0, 0, .4}

\usepackage[hyperfootnotes=false,colorlinks=true,citecolor=black,linkcolor=black,urlcolor=blue]{hyperref}
\hypersetup{
        pdfpagemode=UseThumbs,
        pdftitle={Well-quasi-order for permutation graphs omitting a path and a clique},
        pdfsubject={Combinatorics},
        pdfauthor={Atminas, Brignall, Korpelainen, Lozin, and Vatter},
}

\newtheorem{theorem}{Theorem}[section]
\newtheorem{proposition}[theorem]{Proposition}
\newtheorem{lemma}[theorem]{Lemma}

\DeclareFontFamily{U}{MnSymbolF}{}
\DeclareSymbolFont{mnsymbols}{U}{MnSymbolF}{m}{n}
\DeclareFontShape{U}{MnSymbolF}{m}{n}{
<-6> MnSymbolF5
<6-7> MnSymbolF6
<7-8> MnSymbolF7
<8-9> MnSymbolF8
<9-10> MnSymbolF9
<10-12> MnSymbolF10
<12-> MnSymbolF12}{}
\DeclareMathSymbol{\bigominus}{\mathop}{mnsymbols}{55}
\let\bigoplus\undefined
\DeclareMathSymbol{\bigoplus}{\mathop}{mnsymbols}{63}

\newcommand{\Av}{\operatorname{Av}}
\newcommand{\C}{\mathcal{C}}
\newcommand{\D}{\mathcal{D}}
\newcommand{\HH}{\mathfrak{H}}

\newcommand{\M}{\mathcal{M}}
\newcommand{\U}{\mathcal{U}}
\newcommand{\zpm}{0/\mathord{\pm} 1}
\newcommand{\Grid}{\operatorname{Grid}}

\newcommand{\Perms}{\Pi} 
\newcommand{\st}{\::\:}
\newcommand{\fnmatrix}[2]{\text{$\left(\mbox{\begin{footnotesize}$\begin{array}{#1}#2\end{array}$\end{footnotesize}}\right)$}}
\newcommand{\sfu}{\mathsf{u}}
\newcommand{\sfd}{\mathsf{d}}
\newcommand{\sfr}{\mathsf{r}}
\newcommand{\sfl}{\mathsf{l}}
\newcommand{\rc}{\text{rc}}

\makeatletter
\newcommand\footnoteref[1]{\protected@xdef\@thefnmark{\ref{#1}}\@footnotemark}
\makeatother
\DefineFNsymbols*{numlamport}[math]{1234\dagger\ddagger\S\P\|{**}{\dagger\dagger}{\ddagger\ddagger}}
\setfnsymbol{numlamport}



\title{Well-Quasi-Order for Permutation Graphs Omitting a Path and a Clique}

\author{%
Aistis Atminas\footnoteref{fn-dimap}\\
\small DIMAP and Mathematics Institute\\[-0.8ex] 
\small University of Warwick, Coventry, UK\\
\small\tt a.atminas@warwick.ac.uk\\ 
\and
Robert Brignall\footnoteref{fn-epsrc}\\
\small Department of Mathematics and Statistics\\[-0.8ex]
\small The Open University, Milton Keynes, UK\\
\small\tt r.brignall@open.ac.uk\\ 
\and
Nicholas Korpelainen\footnoteref{fn-epsrc},\\
\small Mathematics Department\\[-0.8ex]
\small University of Derby, Derby, UK\\
\small\tt n.korpelainen@derby.ac.uk\\ 
\and
Vadim Lozin\footnoteref{fn-dimap}\footnoteref{fn-lozinepsrc}\\
\small DIMAP and Mathematics Institute\\[-0.8ex] 
\small University of Warwick, Coventry, UK\\
\small\tt v.lozin@warwick.ac.uk\\ 
\and 
Vincent Vatter\footnoteref{fn-epsrc}\footnoteref{fn-vatter}\\
\small Department of Mathematics\\[-0.8ex]
\small University of Florida, Gainesville, Florida, USA\\
\small\tt vatter@ufl.edu\\ 
}


\date{\dateline{Jan 31, 2014}{Apr 2, 2015}\\
\small Mathematics Subject Classifications: 05A05, 05A15}

\begin{document}

\addtocounter{footnote}{1}
\footnotetext{\label{fn-dimap}%
Atminas and Lozin gratefully acknowledge support from DIMAP -- the Center for Discrete Mathematics and its Applications at the University of Warwick.%
}
\addtocounter{footnote}{1}
\footnotetext{\label{fn-epsrc}%
Brignall, Korpelainen, and Vatter were partially supported by EPSRC Grant EP/J006130/1.%
}
\addtocounter{footnote}{1}
\footnotetext{\label{fn-lozinepsrc}%
Lozin was partially supported by EPSRC Grants EP/I01795X/1 and EP/L020408/1.%
}

\addtocounter{footnote}{1}
\footnotetext{%
Vatter was partially supported by the National Security Agency under Grant Number H98230-12-1-0207 and the National Science Foundation under Grant Number DMS-1301692.  The United States Government is authorized to reproduce and distribute reprints not-withstanding any copyright notation herein.
\label{fn-vatter}%
}
\maketitle

\begin{abstract}
We consider well-quasi-order for classes of permutation graphs which omit both a path and a clique. Our principle result is that the class of permutation graphs omitting $P_5$ and a clique of any size is well-quasi-ordered. This is proved by giving a structural decomposition of the corresponding permutations. We also exhibit three infinite antichains to show that the classes of permutation graphs omitting $\{P_6,K_6\}$, $\{P_7,K_5\}$, and $\{P_8,K_4\}$ are not well-quasi-ordered.
\end{abstract}

\section{Introduction}\label{sec-intro}

While the Minor Theorem of Robertson and Seymour~\cite{robertson:graph-minors-i-xx:} shows that the set of all graphs is well-quasi-ordered under the minor relation, it is well known that this set is not well-quasi-ordered under the induced subgraph order. Consequently, there has been considerable interest in determining which classes of graphs are well-quasi-ordered under this order. Here we consider finite graphs and permutation graphs which omit both a path $P_k$ and a clique $K_\ell$. Our main result, proved in Section~\ref{sec-p5-kell}, is that permutation graphs which avoid both $P_5$ and a clique $K_\ell$ are well-quasi-ordered under the induced subgraph order for every finite $\ell$. We also prove, in Section~\ref{sec-p7}, that the three classes of permutation graphs defined by forbidding $\{P_6,K_6\}$, $\{P_7,K_5\}$ and $\{P_8,K_4\}$ respectively are not well-quasi-ordered, by exhibiting an infinite antichain in each case.

We begin with elementary definitions. We say that a \emph{class} of graphs is a set of graphs closed under isomorphism and taking induced subgraphs. A class of graphs is \emph{well-quasi-ordered (wqo)} if it contains neither an infinite strictly decreasing sequence nor an infinite antichain (a set of pairwise incomparable graphs) under the induced subgraph ordering. Note that as we are interested only in classes of finite graphs, wqo is synonymous with a lack of infinite antichains.

Given a permutation $\pi=\pi(1)\cdots\pi(n)$, its corresponding \emph{permutation graph} is the graph $G_\pi$ on the vertices $\{1,\dots,n\}$ in which $i$ is adjacent to $j$ if both $i\le j$ and $\pi(i)\ge\pi(j)$. This mapping is many-to-one, because, for example, $G_{231}\cong G_{312}\cong P_3$. Given permutations $\sigma=\sigma(1)\cdots\sigma(k)$ and $\pi=\pi(1)\cdots\pi(n)$, we say that $\sigma$ is \emph{contained} in $\pi$ and write $\sigma\le\pi$ if there are indices $1\le i_1<\cdots<i_k\le n$ such that the sequence $\pi(i_1)\cdots\pi(i_k)$ is in the same relative order as $\sigma$. For us, a \emph{class} of permutations is a set of permutation closed downward under this containment order, and a class is wqo if it does not contain an infinite antichain.

The mapping $\pi\mapsto G_\pi$ is easily seen to be \emph{order-preserving}, i.e., if $\sigma\le\pi$ then $G_\sigma$ is an induced subgraph of $G_\pi$. Therefore if a class $\C$ of permutations is wqo then the associated class of permutation graphs $\{G_\pi \st \pi\in\C\}$ must also be wqo. However, it is possible to exploit the many-to-one nature of this mapping to construct infinite antichains of permutations which do not correspond to antichains of permutation graphs (though currently there are no examples of this in the literature). For this reason, when showing that classes of permutation graphs are wqo we instead prove the stronger result that the associated permutation classes are wqo, but when constructing infinite antichains, we must construct antichains of permutation graphs.

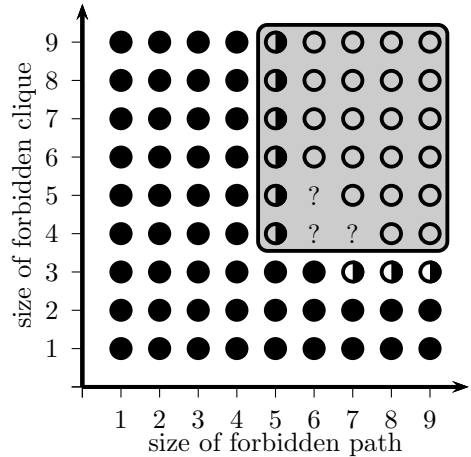
\begin{figure}[t]
\begin{center}
\begin{footnotesize}

\newcommand{\wqobothicon}{\pscircle*{0.3\psxunit}}
\newcommand{\wqopermsicon}{%
	\psarc*(0,0){0.3\psxunit}{-90}{90}
	\pscircle{0.3\psxunit}
}
\newcommand{\wqononeicon}{\pscircle{0.3\psxunit}}

\psset{xunit=0.2in, yunit=0.2in, linestyle=solid, linewidth=0.02in}
\begin{pspicture}(0,-2)(10,10)
\psaxes[dy=1,Dy=1,dx=1,Dx=1,tickstyle=bottom,showorigin=false]{->}(0,0)(10,10)
\rput[c](5,-1.5){\mbox{size of forbidden path}}
\rput[c](-1.5,5){\rotateleft{\mbox{size of forbidden clique}}}

\psframe[framearc=0.1,fillcolor=lightgray,fillstyle=solid](4.5,3.5)(9.5,9.5)

\multips(1,1)(1,0){4}{
	\multips(0,0)(0,1){9}{%
		\rput(0,0){\wqobothicon}
	}
}

\multips(5,1)(1,0){5}{%
	\rput(0,0){\wqobothicon}
}

\multips(5,2)(1,0){5}{%
	\rput(0,0){\wqobothicon}
}

\rput(5,3){\wqobothicon}
\rput(6,3){\wqobothicon}
\rput(7,3){\wqopermsicon}
\rput(8,3){\wqopermsicon}
\rput(9,3){\wqopermsicon}

\multips(5,4)(0,1){6}{
	\rput(0,0){\wqopermsicon}
}

\rput(6,4){?}
\rput(6,5){?}
\multips(6,6)(0,1){4}{
	\rput(0,0){\wqononeicon}
}

\rput(7,4){?}
\multips(7,5)(0,1){5}{
	\rput(0,0){\wqononeicon}
}

\multips(8,4)(0,1){6}{
	\rput(0,0){\wqononeicon}
}

\multips(9,4)(0,1){6}{
	\rput(0,0){\wqononeicon}
}

\end{pspicture}

\end{footnotesize}
\end{center}
\caption{This figure shows the known wqo results for classes of graphs and permutation graphs avoiding paths and cliques, including the results of this paper. Filled circles indicate that all graphs avoiding the specified path and clique are wqo. Half-filled circles indicate that the corresponding class of permutation graphs are wqo, but that the corresponding class of all graphs are not wqo. Empty circles indicate that neither class is wqo. Note that for the three unknown cases (indicated by question marks), it is known that the corresponding class of graphs contains an infinite antichain.}\label{fig-wqo-results}
\end{figure}

A summary of our results is shown in Figure~\ref{fig-wqo-results}, with our new contributions in the upper-right highlighted. The rest of this paper is organised as follows: in Section~\ref{sec-non-perm} we briefly summarise the status of the analogous question for non-permutation graphs. Section~\ref{sec-structural-tools} sets up the necessary notions from the study of permutation classes, before the proof of the well-quasi-orderability of $P_5$, $K_\ell$-free permutation graphs in Section~\ref{sec-p5-kell}. Section~\ref{sec-p7} contains three non-wqo results, Section~\ref{sec-enum} briefly presents some enumerative consequences of our results, and the final section contains a few concluding remarks about the three remaining open cases.

\section{Non-Permutation Graphs}\label{sec-non-perm}

When the graphs needn't be permutation graphs, well-quasi-ordering is of course harder to attain. On the side of wqo, graphs avoiding $K_2$ are trivially wqo and graphs avoiding $P_4$ (co-graphs) are well-known to be wqo, so there are only two nontrivial results, namely graphs avoiding $K_3$ and $P_5$ or $P_6$. Of course, it suffices to show that $K_3, P_6$-free graphs are wqo, and this was recently proved by Atminas and Lozin~\cite{atminas:labelled-induced:}.

On the side of non-wqo, two infinite antichains are required: one (from~\cite{korpelainen:two-forbidden-i:}) in the class of graphs with neither $P_5$ (or even $2K_2$) nor $K_4$, and one (from~\cite{korpelainen:bipartite-induc:}) in the class omitting both $P_7$ and $K_3$. For completeness, we outline both constructions here.

For graphs which omit $P_5$ and $K_4$, Korpelainen and Lozin~\cite{korpelainen:two-forbidden-i:} construct an infinite antichain by adapting a correspondence between permutations and graphs due to Ding~\cite{ding:subgraphs-and-w:}. The correspondence we require for our antichain can be described as follows, and is accompanied by Figure~\ref{fig-ding-antichain}. For a permutation $\pi$ of length $n$, first note that $\pi$ can be thought of as a structure with $n$ points, equipped with two linear orderings.

With this in mind, form a graph $B_\pi$ which consists of three independent sets, $U$, $V$ and $W$, each containing $n$ vertices. Let $U=\{u_1,u_2,\dots,u_n\}$. Between $U$ and $V$, there is a \emph{chain graph}: vertex $u_i$ in $U$ is adjacent to $i$ vertices of $V$, and for $i>1$ the neighbourhood of $u_{i-1}$ in $V$ is contained in the neighbourhood of $u_i$ in $V$. Note that this containment of neighbourhoods defines a linear ordering on the vertices of $U$: $u_1<u_2<\cdots < u_n$.

Next, between $U$ and $W$, we build another chain graph. This time, vertex $u_{\pi(i)}$ has $i$ neighbours in $W$, and for $i>1$ the neighbourhood of $u_{\pi(i)}$ in $W$ contains the neighbourhood of $u_{\pi(i-1)}$ in $W$. This defines a second linear ordering on $U$, namely $u_{\pi(1)}\prec u_{\pi(2)}\prec \cdots\prec u_{\pi(n)}$, and hence $\pi$ has been encoded in $B_\pi$. Finally, to complete the construction, between $V$ and $W$ there is a complete bipartite graph, i.e., every edge is present.

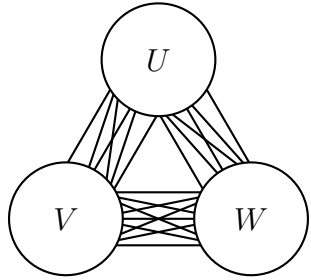
\begin{figure}
\centering
\psset{xunit=10pt, yunit=10pt, runit=0.03in}
\begin{pspicture}(-5,-5)(5,6)
\rput*{-30}(0,4){%
    \pnode(-1,-1){uw1}
    \pnode(0,-1){uw2}
    \pnode(1,-1){uw3}
}
\rput*{30}(0,4){%
    \pnode(-1,-1){uv1}
    \pnode(0,-1){uv2}
    \pnode(1,-1){uv3}
}
\rput*{-120}(0,0){%
 \rput*{-30}(0,4){%
    \pnode(-1,-1){vu3}
    \pnode(0,-1){vu2}
    \pnode(1,-1){vu1}
  }
  \rput*{30}(0,4){%
    \pnode(-1,-1){vw3}
    \pnode(0,-1){vw2}
    \pnode(1,-1){vw1}
  }
}
\rput*{120}(0,0){%
 \rput*{30}(0,4){%
    \pnode(-1,-1){wu3}
    \pnode(0,-1){wu2}
    \pnode(1,-1){wu1}
  }
  \rput*{-30}(0,4){%
    \pnode(-1,-1){wv1}
    \pnode(0,-1){wv2}
    \pnode(1,-1){wv3}
  }
}
\ncline{uv1}{vu1}\ncline{uv1}{vu2}\ncline{uv1}{vu3}
\ncline{uv2}{vu2}\ncline{uv2}{vu3}\ncline{uv3}{vu3}
\ncline{uw1}{wu1}\ncline{uw1}{wu2}\ncline{uw1}{wu3}
\ncline{uw2}{wu2}\ncline{uw2}{wu3}\ncline{uw3}{wu3}
\ncline{vw1}{wv1}\ncline{vw2}{wv2}\ncline{vw3}{wv3}
\ncline{vw1}{wv2}\ncline{vw2}{wv3}\ncline{vw3}{wv1}
\ncline{vw1}{wv3}\ncline{vw2}{wv1}\ncline{vw3}{wv2}
\pscircle[fillstyle=solid,fillcolor=white](0,4){10}\rput(0,4){$U$}
\rput*{120}(0,0){\pscircle[fillstyle=solid,fillcolor=white](0,4){10}\rput*{*0}(0,4){$V$}}
\rput*{-120}(0,0){\pscircle[fillstyle=solid,fillcolor=white](0,4){10}\rput*{*0}(0,4){$W$}}
\end{pspicture}
\caption[]{The construction of $B_\pi$ from a permutation $\pi$.}
\label{fig-ding-antichain}
\end{figure}

Now it is routine to verify that $B_\pi$ does not contain $2K_2\le P_5$ or $K_4$ for any $\pi$. Moreover for permutations $\sigma$ and $\pi$, we have $\sigma\leq\pi$ if and only if $B_\sigma\leq B_\pi$ as induced subgraphs. Thus, one may take any infinite antichain of permutations (for example, the ``increasing oscillating'' antichain), and encode each element of the antichain as a graph, yielding an infinite antichain in the class of $2K_2$, $K_4$-free graphs.

For graphs which omit $P_7$ and $K_3$, a modification of this construction was used by~\cite{korpelainen:bipartite-induc:} to answer a question of Ding~\cite{ding:subgraphs-and-w:}, who had asked whether $P_7$-free bipartite graphs are wqo. Starting with $B_\pi$, the modification ``splits'' every vertex $u$ of $U$ into two, $u^{(1)}$ and $u^{(2)}$, with an edge between them. The new vertex $u^{(1)}$ takes the neighbourhood of $u$ with $V$, while $u^{(2)}$ takes the neighbourhood of $u$ with $W$. The result is a graph with four independent sets $U^{(1)}$, $U^{(2)}$, $V$ and $W$ each of size $n$, with a perfect matching between $U^{(1)}$ and $U^{(2)}$, a complete bipartite graph between $V$ and $W$, and chain graphs between $U^{(1)}$ and $V$, and between $U^{(2)}$ and $W$.

This construction yields a $2P_3$-free bipartite graph. Moreover, the permutation $\pi$ is still encoded in such a way as to ensure an infinite antichain of permutations maps to an infinite antichain of graphs.

\section{Structural Tools}\label{sec-structural-tools}

Instead of working directly with permutation graphs, we establish our wqo results for the corresponding permutation classes (which, by our observations in the introduction, is a stronger result). The permutations $24153$ and $31524$ are the only permutations which correspond to the permutation graph $P_5$, and thus to establish our main result we must determine the structure of the permutation class
$$
\Av(24153, 31524, \ell\cdots 21).
$$
Considering the wqo problem from the viewpoint of permutations has the added benefit of allowing us to make use of the recently developed tools in this field. In particular, we utilise grid classes and the substitution decomposition. Thus before establishing our main result we must first introduce these concepts.

We frequently identify a permutation $\pi$ of length $n$ with its \emph{plot}, the set $\{(i,\pi(i))\st 1\le i\le n\}$ of points in the plane. We say that a rectangle in the plane is \emph{axis-parallel} if its top and bottom sides are parallel with the $x$-axis while its left and right sides are parallel with the $y$-axis. Given natural numbers $i$ and $j$ we denote by $[i,j]$ the closed interval $\{i,i+1,\dots,j\}$ and by $[i,j)$ the half-open interval $\{i,i+1,\dots,j-1\}$. Thus the axis-parallel rectangles we are interested in may be described by $[x,x']\times[y,y']$ for natural numbers $x$, $x'$, $y$, and $y'$.

Monotone grid classes are a way of partitioning the entries of a permutation (or rather, its plot) into monotone axis-parallel rectangles in a manner specified by a $\zpm$ matrix. In order for these matrices to align with plots of permutations, we index them with Cartesian coordinates. Suppose that $M$ is a $t\times u$ matrix (thus $M$ has $t$ columns and $u$ rows). An \emph{$M$-gridding} of the permutation $\pi$ of length $n$ consists of a pair of sequences $1=c_1\le\cdots\le c_{t+1}=n+1$ and $1=r_1\le\cdots\le r_{u+1}=n+1$ such that for all $k$ and $\ell$, the entries of (the plot of) $\pi$ that lie in the axis-parallel rectangle $[c_k,c_{k+1})\times [r_\ell,r_{\ell+1})$ are increasing if $M_{k,\ell}=1$, decreasing if $M_{k,\ell}=-1$, or empty if $M_{k,\ell}=0$.

We say that the permutation $\pi$ is \emph{$M$-griddable} if it possesses an $M$-gridding, and the \emph{grid class} of $M$, denoted by $\Grid(M)$, consists of the set of $M$-griddable permutations. We further say that the permutation class $\C$ is $M$-griddable if $\C\subseteq\Grid(M)$, and that this class is \emph{monotone griddable} if there is a finite matrix $M$ for which it is $M$-griddable.

Grid classes were first described in this generality (albeit under a different name) by Murphy and Vatter~\cite{murphy:profile-classes:}, who studied their wqo properties. To describe their result we need the notion of the \emph{cell graph} of a matrix $M$. This graph has vertex set $\{(i,j)\st M_{i,j}\neq 0\}$ and $(i,j)$ is adjacent to $(k,\ell)$ if they lie in the same row or column and there are no nonzero entries lying between them in this row or column. We typically attribute properties of the cell graph of $M$ to $M$ itself; thus we say that $M$ is a forest if its cell graph is a forest.

\begin{theorem}[Murphy and Vatter~\cite{murphy:profile-classes:}]\label{thm-murphy-vatter}
The grid class $\Grid(M)$ is wqo if and only if $M$ is a forest.
\end{theorem}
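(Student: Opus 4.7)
The plan is to prove both implications. For the forward direction (if $\Grid(M)$ is wqo then $M$ is a forest), I would prove the contrapositive by exhibiting an infinite antichain inside $\Grid(M)$ whenever the cell graph of $M$ contains a cycle. The construction is an analog of the classical increasing oscillating antichain: given a cycle $c_1, c_2, \ldots, c_k, c_1$ in the cell graph, for each $n$ build a permutation $\alpha_n$ consisting of an oscillating configuration of roughly $n$ points threaded through the cycle, with consecutive cycle cells linked by the standard bouncing pattern in which successive points in $c_i$ and $c_{i+1}$ interleave monotonically along their shared row or column. The monotone direction prescribed in each cell, combined with the cyclic arrangement, forces any embedding of $\alpha_n$ into $\alpha_m$ to preserve the threading, so the $\alpha_n$ are pairwise incomparable. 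Verifying this rigidity is the bulk of the work for this direction but follows the standard template for oscillation-type antichains.

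For the reverse direction (if $M$ is a forest then $\Grid(M)$ is wqo), I would use induction on the number of nonzero cells of $M$. The base case of a single nonzero cell is immediate, since $\Grid(M)$ is then a class of monotone permutations. For the inductive step, pick a leaf cell $c$ of the cell graph with unique neighbor $c'$, and let $M'$ be the matrix obtained from $M$ by replacing the entry at $c$ by $0$. Then $M'$ is a forest with one fewer nonzero cell, so $\Grid(M')$ is wqo by induction.

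Given any infinite sequence $\pi_1, \pi_2, \ldots$ in $\Grid(M)$ with fixed griddings, let $\sigma_i$ denote the subpermutation obtained by deleting the entries in cell $c$. Then each $\sigma_i$ lies in $\Grid(M')$, and because $c$ is a leaf its deleted points form a monotone sequence whose interaction with $\sigma_i$ is determined entirely by how they interleave with the monotone points of $c'$ along the shared row or column. One can therefore encode each $\pi_i$ as a pair $(\sigma_i, w_i)$, where $w_i$ is a word whose letters record, in the monotone order of cell $c$, which gap between consecutive points of $c'$ each leaf-cell point occupies. Applying Higman's lemma to the product of the wqo on $\Grid(M')$ and the wqo on such words, I would extract indices $i < j$ with $\sigma_i \le \sigma_j$ together with a compatible word embedding $w_i \le w_j$, and these reconstruct an embedding $\pi_i \le \pi_j$.

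The main obstacle is this last encode-decode step: the alphabet of gap positions depends on the number of $c'$-points in $\sigma_j$ and so is \emph{a priori} unbounded across the sequence. This is resolved by relabelling each letter of $w_i$ relative to the matching image of $c'$-points supplied by a chosen embedding $\sigma_i \le \sigma_j$; once that embedding is fixed, the letters effectively range over a bounded set indexed by the gaps of $\sigma_i$, which puts us in the standard Higman setup. One then has to check that the combined word-plus-subpermutation embedding translates back into a genuine permutation embedding, which uses crucially that $c$ is a leaf, so its entries commute with all cells other than $c'$ and their placement in $\pi_j$ is constrained only by the $c'$ gaps.
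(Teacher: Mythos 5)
First, a point of comparison: the paper itself does not prove this theorem --- it is quoted from Murphy and Vatter, and the paper remarks that Vatter and Waton later gave a much simpler proof. So your attempt must be judged on its own merits. Your forward direction (a cycle in the cell graph yields an infinite antichain obtained by threading an oscillation around the cycle) is the standard and correct approach, although essentially all of the work is deferred to the ``rigidity'' verification.

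The reverse direction, however, has two genuine gaps. First, a leaf cell $c$ of the cell graph does \emph{not} interact only with its unique neighbour $c'$: cell-graph adjacency joins only \emph{consecutive} nonzero cells in a row or column, so, for example, a single column with three nonzero cells has cell graph $P_3$, and the points of either leaf cell interleave by position with the points of \emph{both} other cells in that column. Your encoding, which records only the gaps between consecutive points of $c'$, therefore does not determine $\pi_i$ from the pair $(\sigma_i,w_i)$; it forgets how the $c$-points interleave with the remaining cells sharing $c$'s column. Second, and more seriously, the Higman step is invalid as stated. The letters of $w_i$ are gaps of $\sigma_i$, so a comparison of $w_i$ with $w_j$ is only meaningful relative to an embedding $\sigma_i\le\sigma_j$; the order on pairs $(\sigma_i,w_i)$ is consequently not a product of two well-quasi-orders, and Higman's lemma cannot be applied to it. Your proposed repair --- relabelling $w_i$ via ``the matching image of $c'$-points supplied by a chosen embedding $\sigma_i\le\sigma_j$'' --- is circular: the pair $i<j$ and the embedding are precisely what the wqo argument is supposed to produce, so they cannot be used to define the quasi-order on which that argument runs. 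This is exactly the well-known subtlety that wqo does not formally imply ``labelled'' wqo. The proofs in the literature avoid it by proving something stronger than the inductive hypothesis you carry: either one encodes entire gridded permutations as words over a \emph{finite} alphabet (one letter per cell, read in a globally consistent order, which exists precisely because $M$ is a forest --- this is the geometric-grid-class viewpoint underlying Theorem~\ref{thm-geom-simples-pwo-basis}), or one follows Murphy and Vatter's original, more intricate argument. To salvage your induction you would need to strengthen the inductive hypothesis to a decorated form of wqo for $\Grid(M')$, not merely wqo.
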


(This is a slightly different form of the result than is stated in \cite{murphy:profile-classes:}, but the two forms are equivalent as shown by Vatter and Waton~\cite{vatter:on-partial-well:}, who also gave a much simpler proof of Theorem~\ref{thm-murphy-vatter}.)

The monotone griddable classes were characterised by Huczynska and Vatter~\cite{huczynska:grid-classes-an:}. In order to present this result we also need some notation. Given permutations $\sigma$ and $\tau$ of respective lengths $m$ and $n$, their \emph{sum} is the permutation $\pi\oplus\tau$ whose plot consists of the plot of $\tau$ above and to the right of the plot of $\sigma$. More formally, this permutation is defined by
$$
\sigma\oplus\tau(i)
=
\left\{\begin{array}{ll}
\sigma(i)&\mbox{for $1\le i\le m$,}\\
\tau(i-m)+m&\mbox{for $m+1\le i\le m+n$.}
\end{array}\right.
$$
The obvious symmetry of this operation (in which the plot of $\tau$ lies below and to the right of the plot of $\sigma$) is called the \emph{skew sum} of $\sigma$ and $\tau$ and is denoted $\sigma\ominus\tau$. We can now state the characterisation of monotone griddable permutation classes.

\begin{theorem}[Huczynska and Vatter~\cite{huczynska:grid-classes-an:}]
\label{thm-grid-characterisation}
The permutation $\C$ is monotone griddable if and only if it does not contain arbitrarily long sums of $21$ or skew sums of $12$.
\end{theorem}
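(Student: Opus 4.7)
My plan is to prove the two directions separately. The forward direction is a short pigeonhole argument on gridding cells; the backward direction requires identifying a structural decomposition of $\Av(21^{\oplus N}, 12^{\ominus N})$ and is the essential content.

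For the forward direction, I would fix $\C\subseteq\Grid(M)$ for some $t\times u$ matrix $M$ and bound the largest $k$ with $21^{\oplus k}\in\Grid(M)$. Given any $M$-gridding of $21^{\oplus k}$, call a copy of $21$ \emph{split} if its two entries lie in different cells. Each split copy straddles at least one of the $(t-1)+(u-1)$ interior grid lines, and each interior line is straddled by at most one copy (distinct copies occupy disjoint pairs of consecutive positions and disjoint pairs of consecutive values), so at most $t+u-2$ copies are split. On the other hand, no cell contains two non-split copies: the four points of two such copies, read in position order, would realize the pattern $2143$, which is neither increasing nor decreasing. Hence $k\le tu+t+u-2$. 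The bound for $12^{\ominus k}$ follows from the reverse-complement symmetry applied to $M$.

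For the backward direction, fix $N$ so that $\C\subseteq\Av(21^{\oplus N},12^{\ominus N})$. I would produce a single matrix $M=M(N)$, depending only on $N$, such that every $\pi\in\C$ is $M$-griddable. The plan is to label each point $p=(i,\pi(i))$ of $\pi$ by a pair $(a(p),b(p))\in\{0,1,\ldots,N-1\}^2$, where $a(p)$ measures the longest $21^{\oplus}$-pattern of $\pi$ having $p$ as its distinguished final point (the lower-right entry of the last $21$) and $b(p)$ is the analogous statistic for $12^{\ominus}$. The avoidance hypotheses force $(a(p),b(p))$ into $\{0,\ldots,N-1\}^2$, giving at most $N^2$ label classes. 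The goal is then to show that (i) each label class is a monotone subsequence of $\pi$, and (ii) the classes project to interval decompositions of the position and value axes with $O(N^2)$ endpoints, producing the desired $M(N)$-gridding of $\pi$.

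The technical heart is Claim (i), and it is here that the main obstacle lies. The intended argument is by extremality: if two same-labelled points $p_1,p_2$ with $i_1<i_2$ violated the expected monotonicity, then the witness pattern for $a$ or $b$ at $p_1$ could be extended by the pair $(p_1,p_2)$ to a strictly longer witness ending at $p_2$, contradicting equality of labels. The difficulty is calibrating the definitions of $a$ and $b$ so that this lift genuinely produces a longer witness; a naive definition fails because the maximal witness at $p_1$ may involve values that obstruct combination with $v_2$. A likely remedy is to restrict to ``quadrant-constrained'' witnesses, for example taking $a(p)$ to count $21^{\oplus}$-patterns lying strictly in the upper-left region of $p$, so that any witness automatically remains compatible with $p_2$. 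Once (i) is verified, Claim (ii) should follow from the observation that $a$ is weakly monotone along horizontal strips (and $b$ along vertical strips), giving a canonical axis-aligned partition from which $M(N)$ is read off.
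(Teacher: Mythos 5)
Your forward direction is correct: the split/non-split count gives $k\le tu+t+u-2$ for any gridding of the sum of $k$ copies of $21$ by a $t\times u$ matrix, and the symmetric argument handles skew sums of $12$. (Note that the paper does not prove this theorem at all --- it is quoted from Huczynska and Vatter --- so what follows compares your plan with the standard proof, which runs on exactly the tools the paper assembles in Section~\ref{sec-structural-tools}.) The backward direction, however, breaks at Claim~(i), and the failure is not a matter of ``calibration'': both definitions you float are falsified by small permutations lying in classes that satisfy the hypothesis. For the ``witness ending at $p$'' definition, take $\pi=51324\in\Av(2143,3412)$ (so the hypothesis holds with $N=2$). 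Reading the entries left to right, the labels $(a(p),b(p))$ are $(0,0),(1,0),(1,1),(1,1),(1,1)$: for instance, the final entry $4$ has $a=1$ because any $21$ ending at it must use the initial $5$, and nothing lies below $4$ and to the left of $5$; similarly $b=1$ because the only candidates for an earlier $12$ sit in position $21$ or are single points. Thus the level set $(1,1)$ consists of the last three entries $3,2,4$, the pattern $213$, which is not monotone. For the quadrant-constrained repair, take $\pi=3254761$ (the sum $214365$ of three copies of $21$, followed by a new minimum): each of the first four points $(1,3),(2,2),(3,5),(4,4)$ has at most one point in its upper-left quadrant and no $12$ in its lower-left quadrant, so however you calibrate $b$, all four receive the same label --- and they form $2143$. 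The root cause is structural: unlike the longest-increasing-subsequence labelling behind Erd\H{o}s--Szekeres, a sum-of-$21$ witness ending at $p_1$ cannot in general be extended through a point $p_2$ below and to the right of $p_1$, because the new copy $(p_1,p_2)$ must lie entirely above and to the right of all earlier copies, and $p_2$ may lie below them. No choice of quadrant removes this obstruction.

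Claim~(ii) is a second, independent gap, and it is where the real content of the theorem lives. A partition of a permutation into boundedly many monotone subsequences does not yield a gridding by a bounded matrix: the sum of $k$ copies of $21$ is the union of two increasing subsequences, yet by your own forward-direction bound any matrix gridding it has roughly $k$ cells. So even a correct Claim~(i) would leave you needing to prove that the monotone pieces can be cut by boundedly many horizontal and vertical lines into rectangular blocks, which is essentially the theorem itself, not an ``observation.'' The actual proof avoids labelling entirely and uses the two results this paper records as Proposition~\ref{prop-mono-rectangles} and Theorem~\ref{thm-slice}. Given $\pi$ in a class avoiding the sum of $N$ copies of $21$ and the skew sum of $N$ copies of $12$, consider the family of all non-monotone axis-parallel rectangles of $\pi$; each such rectangle contains both a $21$ and a $12$. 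If some $(N-1)^2+1$ of them were pairwise independent, then by Erd\H{o}s--Szekeres applied to the rectangles themselves, either $N$ of them lie in increasing position (choosing a $21$ inside each yields the forbidden sum of $21$'s) or $N$ lie in decreasing position (choosing a $12$ inside each yields the forbidden skew sum of $12$'s). Hence the independence number is at most $(N-1)^2$, so by Theorem~\ref{thm-slice} (Gy\'arf\'as--Lehel) all non-monotone rectangles can be sliced by $f((N-1)^2)$ lines, uniformly over the class, and condition (2) of Proposition~\ref{prop-mono-rectangles} then gives monotone griddability. If you want to salvage your write-up, keep your forward direction and replace the labelling scheme by this slicing argument.
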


The reader might note that the classes we are interested in are not monotone griddable, let alone $M$-griddable for a forest $M$. However, our proof will show that these classes can be built from a monotone griddable class via the ``substitution decomposition'', which we define shortly. Before this, though, we must introduce a few more concepts concerning monotone grid classes, the first two of which are alternative characterisations of monotone griddable classes.

Given a permutation $\pi$, we say that the axis-parallel rectangle $R$ is \emph{monotone} if the entries of $\pi$ which lie in $R$ are monotone increasing or decreasing (otherwise $R$ is \emph{non-monotone}). We say that the permutation $\pi$ can be \emph{covered by $s$ monotone rectangles} if there is a collection $\mathfrak{R}$ of $s$ monotone axis-parallel rectangles such that every point in the plot of $\pi$ lies in at least one rectangle in $\mathfrak{R}$. Clearly if $\C\subseteq\Grid(M)$ for a $t\times u$ matrix $M$ then every permutation in $\C$ can be covered by $tu$ monotone rectangles. To see the other direction, note that every permutation which can be covered by $s$ monotone rectangles is $M$-griddable for some matrix $M$ of size at most $(2s-1)\times (2s-1)$. There are only finitely many such matrices, say $M^{(1)}$, $\dots$, $M^{(m)}$, so their \emph{direct sum},
$$
\left(\begin{array}{ccc}
&&M^{(m)}\\
&\iddots&\\
M^{(1)}&&
\end{array}\right)
$$
is a finite matrix whose grid class contains all such permutations.%
\footnote{\label{fn-matrix-sum}%
By adapting this argument it follows that if every permutation in the class $\C$ lies in the grid class of a forest of size at most $t\times u$, then $\C$ itself lies in the grid class of a (possibly much larger) forest.}

This characterisation of monotone griddability is recorded in Proposition~\ref{prop-mono-rectangles} below, which also includes a third characterisation. We say that the line $L$ \emph{slices} the rectangle $R$ if $L\cap R\neq\emptyset$. If $\C\subseteq\Grid(M)$ for a $t\times u$ matrix then for every permutation $\pi\in\C$ there is a collection of $t+u$ horizontal and vertical lines (the grid lines) which slice every non-monotone axis-parallel rectangle of $\pi$. Conversely, every such collection of lines defines a gridding of $\pi$, completing the sketch of the proof of the following result.

\begin{proposition}[specialising Vatter~{\cite[Proposition 2.3]{vatter:small-permutati:}}]
\label{prop-mono-rectangles}
For a permutation class $\C$, the following are equivalent:
\begin{enumerate}
\item[(1)] $\C$ is monotone griddable,
\item[(2)] there is a constant $\ell$ such that for every permutation $\pi\in\C$ the set of non-monotone axis-parallel rectangles of $\pi$ can be sliced by a collection of $\ell$ horizontal and vertical lines, and
\item[(3)] there is a constant $s$ such that every permutation in $\C$ can be covered by $s$ monotone rectangles.
\end{enumerate}
\end{proposition}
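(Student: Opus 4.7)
The plan is to establish the equivalences cyclically as (1) $\Rightarrow$ (3) $\Rightarrow$ (1) and (1) $\Leftrightarrow$ (2), following essentially the discussion already given in the paragraphs preceding the statement. All three conditions express, in slightly different geometric language, the same uniform bound on the ``monotone complexity'' of permutations in $\C$, so the proof is mostly a translation between these languages.

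For $(1) \Rightarrow (3)$, assume $\C \subseteq \Grid(M)$ for some fixed $t \times u$ matrix $M$. Then every $\pi \in \C$ admits an $M$-gridding, whose cells $[c_k,c_{k+1}) \times [r_\ell,r_{\ell+1})$ are at most $tu$ axis-parallel rectangles, each of which is monotone by definition of a gridding; take $s = tu$. For $(1) \Rightarrow (2)$, take the $t-1$ interior vertical lines $x = c_k$ and $u-1$ interior horizontal lines $y = r_\ell$ from the gridding. Any non-monotone axis-parallel rectangle $R$ of $\pi$ must meet at least two different cells of the gridding (since each single cell is monotone), so $R$ is sliced by at least one of these grid lines; thus $\ell = t+u-2$ suffices.

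For $(2) \Rightarrow (1)$, suppose $\ell$ horizontal and vertical lines slice every non-monotone axis-parallel rectangle of $\pi \in \C$. These lines partition the plot into at most $(\ell+1)^2$ axis-parallel rectangular cells. If some cell contained a non-monotone subset of entries then the cell itself, as an axis-parallel rectangle, would be non-monotone and (being bounded by, not crossed by, the chosen lines) would not be sliced by any of them, contradicting the hypothesis. Hence each cell is monotone, and extending the endpoints of the lines to the full rows and columns of $\pi$ yields an $M$-gridding for some matrix $M$ of size at most $(\ell+1) \times (\ell+1)$. This already gives (3) with $s = (\ell+1)^2$, and for (1) we use the direct-sum device from the preceding paragraph: there are only finitely many $0/\mathord{\pm}1$ matrices of size at most $(\ell+1)\times(\ell+1)$, and the direct sum of all of them is a single finite matrix whose grid class contains every $\pi \in \C$.

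Finally, $(3) \Rightarrow (1)$ is essentially the argument already sketched in the excerpt. Given a covering of $\pi$ by $s$ monotone axis-parallel rectangles, extend all $2s$ vertical sides of these rectangles to full vertical lines and all $2s$ horizontal sides to full horizontal lines. The resulting arrangement has at most $2s-1$ vertical and $2s-1$ horizontal strips, so at most $(2s-1)^2$ cells, and each cell is contained in some monotone rectangle of the cover and is therefore itself monotone. This exhibits $\pi$ as $M$-griddable for some matrix $M$ of size at most $(2s-1)\times(2s-1)$, and again taking the direct sum of the finitely many such matrices gives a single finite matrix witnessing $\C \subseteq \Grid(M)$.

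There is no genuine obstacle here; the only point that requires a brief check is the implication $(2) \Rightarrow (3)$, where one must verify that the finitely many cells cut out by the slicing lines really are monotone, which follows from the observation that a non-monotone set of entries in a rectangle forces the rectangle itself to be non-monotone. Everything else is direct rewriting between griddings, monotone covers, and slicing lines.
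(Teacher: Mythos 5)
Your proposal is correct and takes essentially the same route as the paper, whose own ``proof'' is exactly the sketch in the two paragraphs preceding the statement: grid lines slice every non-monotone rectangle, slicing lines (or the extended sides of a monotone cover) cut the plot into boundedly many monotone cells, and the direct sum of the finitely many possible gridding matrices yields a single finite matrix for the whole class. The one imprecision is your parenthetical claim that a cell bounded by the slicing lines is not sliced by them---under the paper's definition ($L\cap R\neq\emptyset$) a closed cell is sliced by its bounding lines---but this is repaired by passing to the rectangular hull of the entries interior to each cell, a technicality the paper glosses over entirely.
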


We now move on to the substitution decomposition, which will allow us to build the classes we are interested in from grid classes of forests. An {\it interval\/} in the permutation $\pi$ is a set of contiguous indices $I=[a,b]$ such that the set of values $\pi(I)=\{\pi(i) : i\in I\}$ is also contiguous. Given a permutation $\sigma$ of length $m$ and nonempty permutations $\alpha_1,\dots,\alpha_m$, the {\it inflation\/} of $\sigma$ by $\alpha_1,\dots,\alpha_m$ --- denoted $\sigma[\alpha_1,\dots,\alpha_m]$ --- is the permutation of length $|\alpha_1|+\cdots+|\alpha_m|$ obtained by replacing each entry $\sigma(i)$ by an interval that is order isomorphic to $\alpha_i$ in such a way that the intervals themselves are order isomorphic to $\sigma$. Thus the sum and skew sum operations are particular cases of inflations: $\sigma\oplus\tau=12[\sigma,\tau]$ and $\sigma\ominus\tau=21[\sigma,\tau]$. Given two classes $\C$ and $\U$, the \emph{inflation} of $\C$ by $\U$ is defined as
\[
\C[\U]=\{\sigma[\alpha_1,\dots,\alpha_m]\st\mbox{$\sigma\in\C_m$ and $\alpha_1,\dots,\alpha_m\in\U$}\}.
\]
The class $\C$ is said to be \emph{substitution closed} if $\C[\C]=\C$. The \emph{substitution closure}, $\langle\C\rangle$, of a class $\C$ is defined as the smallest substitution closed class containing $\C$. A standard argument shows that $\langle\C\rangle$ exists, and by specialising a result of \cite{albert:inflations-of-g:} we obtain the following.

\begin{theorem}[Albert, Ru\v{s}kuc, and Vatter~{\cite[specialisation of Theorem 4.4]{albert:inflations-of-g:}}]
\label{thm-geom-simples-pwo-basis}
If the matrix $M$ is a forest then the class $\langle\Grid(M)\rangle$ is wqo.
\end{theorem}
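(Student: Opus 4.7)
The plan is to combine the substitution decomposition with a Higman/Kruskal-style tree argument. Every permutation in $\langle\Grid(M)\rangle$ has a canonical substitution decomposition tree whose internal nodes are labelled either by a simple permutation from $\Grid(M)$ (every simple in $\langle\Grid(M)\rangle$ must itself lie in $\Grid(M)$, since $\Grid(M)$ is closed under taking simple quotients) or by one of the two ``decomposable'' labels $12$ and $21$, with the leaves corresponding to individual entries. Under this correspondence, containment in $\langle\Grid(M)\rangle$ is captured by an order on trees that embeds one tree into another, matches sibling lists in order, and matches each internal label by a pattern containment.

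The obstacle to applying Kruskal's theorem with wqo labels directly is that an internal node is not merely labelled: it is itself inflated by further permutations from $\langle\Grid(M)\rangle$, so one must simultaneously track what is attached to each entry of the label. The standard strengthening is to show that $\Grid(M)$ is \emph{labelled wqo}: for every wqo label set $(L,\le_L)$, the permutations in $\Grid(M)$ with entries labelled from $L$ are wqo under pattern containment refined by the requirement that matched entries compare appropriately in $L$. Given such a strengthening of Theorem~\ref{thm-murphy-vatter}, induction on the height of the decomposition tree, using labelled wqo at each internal node to compare the (labelled) sequence of child subtrees via Higman's lemma, yields that $\langle\Grid(M)\rangle$ is wqo.

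The task therefore reduces to upgrading Theorem~\ref{thm-murphy-vatter} from plain wqo to labelled wqo when $M$ is a forest. The approach I would take is to orient each cell of $M$ (choosing an increasing or decreasing direction for its monotone segment) and, using acyclicity of the cell graph, to choose a consistent cell-by-cell reading order that encodes each labelled gridded permutation as a word over the finite alphabet whose letters record the containing cell together with the label of the entry. Subword embedding of the encoded words, which is wqo by Higman's lemma over the wqo alphabet $(\text{cells}) \times L$, should then translate back into a labelled pattern containment in $\Grid(M)$.

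The main obstacle is verifying that the word encoding faithfully captures the labelled containment order. Acyclicity of the cell graph is essential here: whenever two cells share a row or column their orientations must be chosen compatibly with that shared axis, and a cycle in the cell graph would force mutually inconsistent choices. With the forest hypothesis one can propagate a consistent orientation outward from an arbitrary root by depth-first traversal, and then a verification in the spirit of the argument behind Theorem~\ref{thm-murphy-vatter} given in~\cite{vatter:on-partial-well:} shows that a subword embedding of the encoded words recovers a labelled containment of the original permutations. Once this labelled wqo strengthening is in hand, the Kruskal-style argument on substitution decomposition trees described above completes the proof.
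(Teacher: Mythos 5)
Your plan reconstructs, in all essentials, the proof of the cited result of Albert, Ru\v{s}kuc, and Vatter~\cite{albert:inflations-of-g:} (note that the paper itself does not prove this theorem; it imports it): for a forest $M$ the class $\Grid(M)$ is geometric, labelled gridded permutations can be encoded as words over the finite cell alphabet crossed with the label set, Higman's lemma over this wqo alphabet gives labelled wqo, and the forest hypothesis is precisely what permits the consistent orientations needed for the encoding to be order-faithful (as in~\cite{vatter:on-partial-well:}). That half of your argument is sound, and reducing the substitution-closure problem to labelled wqo of the simple permutations in $\Grid(M)$ is the right reduction. (A minor point: the reason every simple permutation of $\langle\Grid(M)\rangle$ lies in $\Grid(M)$ is that a simple permutation admits only trivial inflations, so it cannot arise as a proper inflation of anything; ``closed under taking simple quotients'' is not the relevant justification.)

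The genuine gap is the lifting step: ``induction on the height of the decomposition tree'' cannot yield the conclusion. Substitution depth is unbounded in $\langle\Grid(M)\rangle$ (as soon as $\Grid(M)$ contains both $12$ and $21$, the closure contains all separable permutations), so a height induction only proves that, for each fixed $h$, the set of permutations of depth at most $h$ is wqo. This does not imply the whole class is wqo, because wqo is not preserved under increasing unions: the downward closure of any infinite antichain is the increasing union of finitely generated (hence wqo) classes. Worse, the bounded-depth slices are not even downward closed --- $231$ has larger substitution depth than the simple permutation $2413$ containing it --- so the inductive statement itself is delicate. The standard repair, and what the cited proof actually does, is a Nash--Williams minimal bad sequence argument: suppose $(\pi_i)$ is a bad sequence in $\langle\Grid(M)\rangle$ chosen minimal with respect to length; by minimality the set $A$ of proper substitution components of the $\pi_i$ is wqo; then the $\pi_i$ are skeletons from $\Grid(M)$ labelled by elements of the wqo set $A$, so your labelled wqo statement (for simple skeletons) together with Higman's lemma (for sum and skew nodes, whose children are sequences over $A$) produces $i<j$ with $\pi_i\le\pi_j$, a contradiction. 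With that replacement your outline becomes a correct proof. It is worth noting that for the application made in this paper your height induction would actually suffice, since Proposition~\ref{prop-bdd-subst-depth} bounds the substitution depth in $\Av(\ell\cdots 21)$, and hence in the classes of Theorem~\ref{thm-P5}; but it does not prove the theorem as stated, which concerns all of $\langle\Grid(M)\rangle$.
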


The permutation $\pi$ is said to be \emph{simple} if the only ways to write it as an inflation are trivial --- that is, it can only be written either as the inflation of a permutation by singletons, or as the inflation of a singleton by a permutation. Thus if $\pi$ has length $n$, it is simple if and only if its only intervals have length $0$, $1$, and $n$. Thus every permutation can be expressed as the inflation of a simple permutation. Moreover, in most cases, this decomposition is unique. A permutation is said to be \emph{sum (resp., skew) decomposable} if it can be expressed as the sum (resp., skew sum) of two shorter permutations. Otherwise it is said to be \emph{sum (resp., skew) indecomposable}.

\begin{proposition}[Albert and Atkinson~\cite{albert:simple-permutat:}]\label{simple-decomp-1}
Every permutation $\pi$ except $1$ is the inflation of a unique simple permutation $\sigma$.  Moreover, if $\pi=\sigma[\alpha_1,\dots,\alpha_m]$ for a simple permutation $\sigma$ of length $m\ge 4$, then each interval $\alpha_i$ is unique.  If $\pi$ is an inflation of $12$ (i.e., is sum decomposable), then there is a unique sum indecomposable $\alpha_1$  such that $\pi=\alpha_1\oplus\alpha_2$.  The same holds, mutatis mutandis, with $12$ replaced by $21$ and sum replaced by skew.
\end{proposition}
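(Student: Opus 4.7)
The plan is to analyse the structure of intervals of $\pi$ and extract a canonical decomposition from the inclusion-maximal proper intervals. First I would establish a basic ``calculus of intervals'': if $I$ and $J$ are intervals of a permutation with $I\cap J\neq\emptyset$, then each of $I\cap J$, $I\cup J$, $I\setminus J$, and $J\setminus I$ is again an interval (whenever nonempty); and if $I\cap J=\emptyset$ but the positions of $I\cup J$ form a contiguous set and the values of $I\cup J$ form a contiguous set, then $I\cup J$ is also an interval. Both facts are immediate from the definition, since intervals are characterised by contiguity of positions together with contiguity of values, and both properties are preserved under these operations in the stated circumstances.

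Next, let $\mathcal{M}$ be the collection of \emph{maximal proper intervals} of $\pi$, i.e., intervals of length between $2$ and $n-1$ that are not strictly contained in any longer proper interval. The interval calculus forces a dichotomy: either (a) the elements of $\mathcal{M}$ are pairwise disjoint, so together with the singleton positions not covered by any member of $\mathcal{M}$ they partition $\{1,\dots,n\}$; or (b) some pair $I,J\in\mathcal{M}$ overlaps, in which case $I\cup J$ is an interval, forcing $I\cup J=\{1,\dots,n\}$ by maximality.

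In case (a), contract each member of $\mathcal{M}$ to a single point (and keep the uncovered singletons) to obtain a permutation $\sigma$ of some length $m$, and let $\alpha_i$ be the pattern of $\pi$ on the $i$th block, giving $\pi=\sigma[\alpha_1,\dots,\alpha_m]$. Any proper interval of $\sigma$ would lift to a proper interval of $\pi$ strictly containing some element of $\mathcal{M}$, contradicting maximality; so $\sigma$ is simple. For uniqueness when $m\geq 4$, note that a simple permutation of length at least $4$ has no proper intervals, so any nontrivial interval of $\pi$ must either be contained inside some block or be a union of blocks corresponding to a (necessarily trivial) interval of $\sigma$; hence the blocks are exactly the members of $\mathcal{M}$ and the decomposition is forced.

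In case (b), examining the overlapping intervals $I$ and $J$ together with $I\cap J$ and the complements $\{1,\dots,n\}\setminus I$, $\{1,\dots,n\}\setminus J$ exhibits $\pi$ as an inflation of $12$ or of $21$. For the uniqueness clause, suppose $\pi$ is sum decomposable (the skew case is symmetric) and, among all decompositions $\pi=\beta_1\oplus\beta_2$, pick the one whose first summand $\beta_1$ has minimal length: this $\beta_1$ is automatically sum-indecomposable, and any two sum-indecomposable first summands must coincide, since otherwise the shorter one would be a proper sum-component prefix of the longer one, contradicting indecomposability. The main obstacle I expect is verifying the dichotomy cleanly — in particular, checking that overlapping maximal proper intervals genuinely force a sum or skew decomposition rather than some more exotic configuration, and handling the borderline ``adjacent but disjoint'' case within the interval calculus.
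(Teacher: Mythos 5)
The paper itself contains no proof of this proposition---it is imported wholesale from Albert and Atkinson---so your proposal can only be compared with the standard argument from the literature, whose overall shape (maximal proper intervals, an interval calculus, a quotient permutation) you reproduce. The existence half of your proof is essentially sound, as is your minimal-first-summand uniqueness argument for sum decompositions. The genuine gap is in the uniqueness claim for $m\ge 4$. You assert that, because a simple permutation of length at least $4$ has no proper intervals, ``any nontrivial interval of $\pi$ must either be contained inside some block or be a union of blocks.'' That inference is invalid: the configuration that has to be excluded is an interval of $\pi$ which \emph{properly overlaps} a block, containing part of it together with entries outside it, and the simplicity of $\sigma$ by itself says nothing about such sets. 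A litmus test: your stated reason applies verbatim to $\sigma=12$, which is also simple and has no proper intervals, yet in $123=12[12,1]$ the interval $\{2,3\}$ crosses the block $\{1,2\}$, and block uniqueness genuinely fails for inflations of $12$ (since also $123=12[1,12]$)---which is exactly why the proposition treats $m\ge 4$ separately from $m=2$. Any correct argument must therefore use the hypothesis $m\ge 4$ in an essential way, and yours never does.

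The missing idea is the shadow argument. If an interval $B$ of $\pi=\sigma[\alpha_1,\dots,\alpha_m]$ lies in no single block, then the set $S=\{i : B\cap\alpha_i\neq\emptyset\}$ is an interval of $\sigma$ of size at least $2$ (contiguity of positions is clear; contiguity of values follows because $B$, having contiguous values, must swallow any block whose values lie between values it already meets), so $S=\{1,\dots,m\}$ by simplicity. Then $B$ contains every block except possibly the blocks extremal in position and those extremal in value; since a simple $\sigma$ of length $m\ge 4$ satisfies $\sigma(1),\sigma(m)\notin\{1,m\}$, no block is extremal in both senses, forcing $B$ to be all of $\pi$. Only with this lemma does your mutual-refinement conclusion (``the blocks are exactly the members of $\mathcal{M}$'') go through. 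Two smaller repairs: your interval calculus is false as stated, since if $I\subsetneq J$ then $J\setminus I$ can be nonempty and fail to be an interval (take $\pi=1234$, $J=\{1,2,3,4\}$, $I=\{2,3\}$); the correct hypothesis is that neither of $I$, $J$ contains the other, which is what your application to distinct maximal intervals provides. Also, in case (a), a lifted proper interval of $\sigma$ could consist entirely of singleton blocks, in which case the contradiction is not that it strictly contains a member of $\mathcal{M}$ but that, being a proper interval of length at least $2$, it must lie \emph{inside} one, contradicting that those positions are uncovered. By contrast, the step you flagged as the main obstacle---that overlapping maximal proper intervals force a sum or skew decomposition---is fine as you sketched it.
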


We close this section by noting how easily this machinery can show that permutation graphs omitting both $P_k$ and $K_3$ are wqo for all $k$, a result originally due to Korpelainen and Lozin~\cite{korpelainen:bipartite-induc:}. The corresponding permutations all avoid $321$ and thus lie in the grid class of an \emph{infinite} matrix, known as the infinite staircase (see Albert and Vatter~\cite{albert:generating-and-:}). Moreover, the sum indecomposable permutations which avoid $321$ and the two permutations corresponding to $P_k$ can be shown to lie in the grid class of a finite staircase,
$$
\left(\begin{array}{ccccc}
&&&1&1\\
&&\iddots&\iddots\\
&1&1\\
1&1
\end{array}\right).
$$
Finally, it follows by an easy application of Higman's Theorem~\cite{higman:ordering-by-div:} that if the sum indecomposable permutations in a class are wqo, then the class itself is wqo. (In this case, the wqo conclusion also follows by Theorem~\ref{thm-geom-simples-pwo-basis}.)

\section{Permutation Graphs Omitting \texorpdfstring{$P_5$ and $K_\ell$}{P\_5 and K\_l}}\label{sec-p5-kell}

In this section we prove that the class of permutations corresponding to permutation graphs omitting $P_5$ and $K_\ell$,
$$
\Av(24153, 31524, \ell\cdots 21),
$$
is wqo. Our proof basically consists of two steps. First, we show that the simple permutations in these classes are monotone griddable, and then we show that these griddings can be refined to forests. The conclusion then follows from Theorem~\ref{thm-geom-simples-pwo-basis}.

Given a set of points in the plane, their \emph{rectangular hull} is defined to be the smallest axis-parallel rectangle containing all of them. We begin with a very simple observation about these simple permutations.

\begin{proposition}
\label{prop-simples-n1}
For every simple permutation $\pi\in\Av(24153, 31524)$, either its greatest entry lies to the left of its least entry, or its leftmost entry lies above its rightmost entry.
\end{proposition}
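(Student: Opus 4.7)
The plan is to argue by contradiction. Suppose $\pi \in \Av(24153, 31524)$ is simple with $a := \pi(1) < \pi(n) =: b$ and with $m := \pi^{-1}(1) < \pi^{-1}(n) =: M$. Since a simple permutation of length $\ge 3$ cannot have $1$ or $n$ at either extremal position (lest the complementary block form a proper interval), $a, b, m, M \in [2, n-1]$.

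First I would extract the two forced constraints from pattern avoidance. The 5-tuple at positions $(1, p, m, M, n)$ for any $p \in (1, m)$ takes values $(a, \pi(p), 1, n, b)$, and matches the forbidden pattern $24153 = (2,4,1,5,3)$ precisely when $b < \pi(p) < n$; therefore $\pi(p) \le b$ for every $p \in (1, m)$. By the symmetric argument using $(1, m, M, p, n)$ and $31524$, every $p \in (M, n)$ satisfies $\pi(p) \ge a$. Setting $V_L := \pi([1, m])$ and $V_R := \pi([M, n])$, we deduce $V_L \subseteq [1, b-1]$ with $\{1, a\} \subseteq V_L$ and $|V_L| = m$, and $V_R \subseteq [a+1, n]$ with $\{b, n\} \subseteq V_R$ and $|V_R| = n - M + 1$.

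Simplicity now forces $V_L$ and $V_R$ to both be non-contiguous value sets: if $V_L$ were contiguous it would equal $[1, m]$ (since $1 \in V_L$ and $|V_L| = m$), making $[1, m]$ a proper interval of $\pi$, and similarly $V_R = [M, n]$ would give a proper interval $[M, n]$. Let $u := \max V_L$ and $\ell := \min V_R$; these are distinct because $V_L \cap V_R = \emptyset$. If $u > \ell$, then $u > a$ places the position $q$ of $u$ in $[2, m-1]$, and $\ell < b$ places the position $s$ of $\ell$ in $(M, n)$; the 5-tuple $(1, q, m, M, s)$ then carries values $(a, u, 1, n, \ell)$ which, since $a < \ell < u < n$, rank to $(2, 4, 1, 5, 3) = 24153$ — a contradiction. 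So $u < \ell$.

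In this remaining regime I would use the non-contiguities to find a value $v_2 \in [2, a-1] \setminus V_L$ (when $V_L \not\supseteq [1, a]$), which by the constraint above is forced to sit at some $p_2 \in (m, M)$, and a value $v_1 \in [b+1, n-1] \setminus V_R$ (when $V_R \not\supseteq [b, n]$), forced to sit at some $p_1 \in (m, M)$. In the favorable ordering $p_1 < p_2$, the 5-tuple $(1, m, p_1, p_2, n)$ takes values $(a, 1, v_1, v_2, b)$ whose ranks give $(3, 1, 5, 2, 4) = 31524$ — contradiction. The main obstacle will be the reverse ordering $p_2 < p_1$, together with the degenerate cases where $V_L \supseteq [1, a]$ (so no such $v_2$ exists) or $V_R \supseteq [b, n]$ (so no such $v_1$ exists). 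These require refining the choice of witnesses — using the positions $q$ of $u$ or $s$ of $\ell$, or additional missing values — or iterating an interval-hull argument starting from $\{(1,a),(m,1)\}$, ultimately producing either one of the forbidden patterns or a proper interval of $\pi$.
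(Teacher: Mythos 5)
Your opening reductions are correct, and they coincide with the paper's own first steps: your two corner constraints (no value above $b$ at a position in $(1,m)$, no value below $a$ at a position in $(M,n)$) and your use of simplicity to force $V_L$ and $V_R$ to be non-contiguous are exactly the paper's analysis of the extremal $2143$ configuration, and the further reduction to $u<\ell$ is also valid. But the argument stops where the theorem starts, and the one non-degenerate case you do close is vacuous: the occurrence of $31524$ that you exhibit for the ordering $p_1<p_2$ is precisely a proof that this ordering never occurs in $\pi$, so whenever both witnesses $v_1>b$ and $v_2<a$ exist you are automatically in your ``reverse ordering'' $p_2<p_1$. Thus what you have actually proved is only an ordering constraint on the middle entries, and the deferred cases --- $p_2<p_1$, $V_L\supseteq[1,a]$, and $V_R\supseteq[b,n]$ --- are not residual bookkeeping but the entire content of Proposition~\ref{prop-simples-n1}. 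They cannot be closed by more pattern-chasing on missing values: in the reverse ordering, the tuples available to you rank as $31254$, $21453$, $24135$, and so on, none of which is forbidden.

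The missing idea is the mechanism the paper uses throughout: the interplay between \emph{extremal witnesses} and \emph{sum-decomposability} (a simple permutation of length at least $4$ cannot be written as a sum of two shorter permutations). In your notation, after using closure under inversion ($24153^{-1}=31524$) to assume $u>a$: let $x$ be the entry of value $u$, at position $q\in(1,m)$, and let $y$ be the rightmost entry, at position $p_y$ with value $v_y$, lying right of position $m$ with value below $u$ (it exists, else $V_L=[1,u]$ would be contiguous). If $v_y\in(a,u)$, then $24153$ (via the leftmost entry, $x$, the least entry, the offending entry, and $y$) forbids any value above $u$ at positions in $(m,p_y)$, so $\pi$ decomposes as a sum at the cut $(p_y,u)$ --- contradiction; hence $v_y<a$. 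Now $31524$ (using the rightmost entry) forbids values above $b$ at positions in $(m,p_y)$, so sum-indecomposability forces an entry there with value in $(u,b)$; let $z$, of value $v_z$, be the greatest such. Sum-indecomposability again forces an entry $w$ right of $p_y$ with value below $v_z$; a $24153$ formed by $x$, $z$, $y$, the greatest entry, and $w$ excludes positions beyond $M$, and the choice of $y$ forces $w$ to have value in $(u,v_z)$; finally $x$, the least entry, $z$, $y$, $w$ form $31524$. This chain of extremal choices and decomposability contradictions --- which also disposes of your two degenerate cases --- is essentially the whole of the paper's proof; your closing sentence gestures at it (``iterating an interval-hull argument'') but supplies none of it.
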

\begin{proof}
Suppose, for a contradiction, that $\pi$ is a simple permutation in $\Av(24153, 31524)$ such that its greatest entry lies to the right of its least, and its leftmost entry lies below its rightmost entry. Thus, these four extremal entries form the pattern $2143$, and the situation is as depicted in Figure~\ref{fig-extremal-points}(i). Since $\pi$ is simple, regions $A$ and $B$ cannot both be empty, so, without loss of generality, suppose that $A$ is non-empty and label the greatest entry in this region as point $x$.

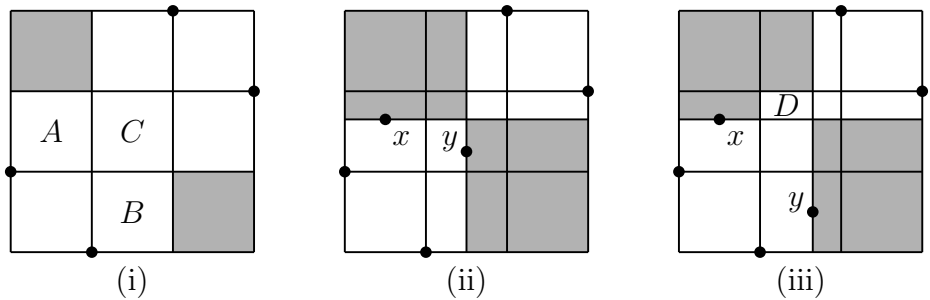
\begin{figure}
\centering
\begin{tabular}{ccccc}
\psset{xunit=0.21in, yunit=0.21in, runit=0.03in, linestyle=solid, linewidth=0.01in}
\begin{pspicture}(0,0)(6,6)
\pspolygon*[linecolor=darkgray](0,4)(0,6)(2,6)(2,4)
\pspolygon*[linecolor=darkgray](4,0)(6,0)(6,2)(4,2)
\rput[c](1,3){$A$}
\rput[c](3,1){$B$}
\rput[c](3,3){$C$}
\multips(0,0)(2,0){4}{\psline(0,0)(0,6)}
\multips(0,0)(0,2){4}{\psline(0,0)(6,0)}
\pscircle*(2,0){1}
\pscircle*(0,2){1}
\pscircle*(4,6){1}
\pscircle*(6,4){1}
\end{pspicture}
&\rule{10pt}{0pt}&
\psset{xunit=0.21in, yunit=0.21in, runit=0.03in, linestyle=solid, linewidth=0.01in}
\begin{pspicture}(0,0)(6,6)
\pspolygon*[linecolor=darkgray](0,3.3)(0,6)(3,6)(3,3.3)
\pspolygon*[linecolor=darkgray](3,0)(6,0)(6,3.3)(3,3.3)
\multips(0,0)(2,0){4}{\psline(0,0)(0,6)}
\multips(0,0)(0,2){4}{\psline(0,0)(6,0)}
\psline(0,3.3)(6,3.3)
\psline(3,0)(3,6)
\pscircle*(2,0){1}
\pscircle*(0,2){1}
\pscircle*(4,6){1}
\pscircle*(6,4){1}
\pscircle*(1,3.3){1}
\pscircle*(3,2.5){1}
\rput[t](1.4,3){$x$}
\rput[t](2.6,3){$y$}
\end{pspicture}
&\rule{10pt}{0pt}&
\psset{xunit=0.21in, yunit=0.21in, runit=0.03in, linestyle=solid, linewidth=0.01in}
\begin{pspicture}(0,0)(6,6)
\pspolygon*[linecolor=darkgray](0,3.3)(0,6)(3.3,6)(3.3,4)(2,4)(2,3.3)
\pspolygon*[linecolor=darkgray](3.3,0)(6,0)(6,3.3)(3.3,3.3)
\multips(0,0)(2,0){4}{\psline(0,0)(0,6)}
\multips(0,0)(0,2){4}{\psline(0,0)(6,0)}
\psline(0,3.3)(6,3.3)
\psline(3.3,0)(3.3,6)
\pscircle*(2,0){1}
\pscircle*(0,2){1}
\pscircle*(4,6){1}
\pscircle*(6,4){1}
\pscircle*(1,3.3){1}
\pscircle*(3.3,1){1}
\rput[t](1.4,3){$x$}
\rput[t](2.9,1.5){$y$}
\rput[c](2.65,3.65){$D$}
\end{pspicture}
\\
(i)&&(ii)&&(iii)\\[10pt]
\end{tabular}

\caption{The impossible configuration for a simple permutation in Proposition~\ref{prop-simples-n1}.}
\label{fig-extremal-points}
\end{figure}

Since $\pi$ is simple, the rectangular hull of the leftmost entry, the least entry, and the point $x$ cannot be an interval in $\pi$. Therefore, there must be a point either in $B$, or in that part of $C$ lying below $x$. Take the rightmost such point, and label it $y$. If $y$ is in region $C$, we immediately encounter the contradiction illustrated in Figure~\ref{fig-extremal-points}(ii): our choices of $x$ and $y$, and the forbidden permutation 24153 causes the permutation to be sum decomposable. Therefore, $y$ is placed in region $B$, and we have the picture depicted in Figure~\ref{fig-extremal-points}(iii). Since $\pi$ is simple, there must now be a point in the region labelled $D$. However, in order for this permutation to not be sum decomposable, we must insert another point in the region to the right of region $D$, and below the greatest entry in $D$, but this would force an occurrence of $31524$.\end{proof}

The class $\Av(24153, 31524, \ell\cdots 21)$ is closed under group-theoretic inversion (because $24153^{-1}=31524$ and $\ell\cdots 21^{-1}=\ell\cdots 21$), so we may always assume that the latter option in Proposition~\ref{prop-simples-n1} holds.

The rest of our proof adapts several ideas from Vatter~\cite{vatter:small-permutati:}. Two rectangles in the plane are said to be \emph{dependent} if their projections onto either the $x$- or $y$-axis have nontrivial intersection, and otherwise they are said to be \emph{independent}. A set of rectangles is called independent if its members are pairwise independent. Thus an independent set of rectangles may be viewed as a permutation, and it satisfies the Erd\H{o}s-Szekeres Theorem (every permutation of length at least $(a-1)(b-1)+1$ contains either $12\cdots a$ or $b\cdots 21$). We construct independent sets of rectangles in the proofs of both Propositions~\ref{prop-simples-griddable} and \ref{prop-gridding-corners}. In these settings, the rectangles are used to capture ``bad'' areas in the plot of a permutation, and our desired result is obtained by slicing the rectangles with horizontal and vertical lines in the sense of Proposition~\ref{prop-mono-rectangles}. The following result shows that we may slice a collection of rectangles with only a few lines, so long as we can bound its independence number.

\begin{theorem}[Gy{\'a}rf{\'a}s and Lehel~\cite{gyarfas:a-helly-type-pr:}]
\label{thm-slice}
There is a function $f(m)$ such that for any collection $\mathfrak{R}$ of axis-parallel rectangles in the plane which has no independent set of size greater than $m$, there exists a set of $f(m)$ horizontal and vertical lines which slice every rectangle in $\mathfrak{R}$.
\end{theorem}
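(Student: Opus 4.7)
My plan is to prove this by induction on $m$, using as the foundational tool Gallai's theorem for intervals on a line: a family of intervals has a piercing set of size $k$ whenever it contains no $k+1$ pairwise disjoint members (equivalently, interval graphs are perfect, so the piercing number of a family of intervals equals the maximum size of a pairwise disjoint subfamily).

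The first reduction I would make is to project every rectangle in $\mathfrak{R}$ onto the $x$-axis. If the resulting family of intervals contains no $m+1$ pairwise disjoint members, then Gallai's theorem yields at most $m$ vertical lines that slice every rectangle, and we are done. Otherwise, there is a subfamily $\mathfrak{R}' \subseteq \mathfrak{R}$ of $m+1$ rectangles with pairwise disjoint $x$-projections. Since any two rectangles in $\mathfrak{R}'$ are automatically $x$-independent, the hypothesis that $\mathfrak{R}$ has no independent set of size greater than $m$ forces the $y$-projections of $\mathfrak{R}'$ to contain no $m+1$ pairwise disjoint members, and so Gallai's theorem applied on the $y$-axis produces at most $m$ horizontal lines that slice every member of $\mathfrak{R}'$.

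The main obstacle I foresee is handling the rectangles in $\mathfrak{R} \setminus \mathfrak{R}'$ that are not yet sliced by these $m$ horizontal lines. The natural strategy is to argue that this residual subfamily has strictly smaller independence number, so that the inductive hypothesis contributes a further $f(m-1)$ lines and we obtain a recursion of the form $f(m) \le c \cdot m + f(m-1)$. To make the drop in independence number rigorous, one would choose $\mathfrak{R}'$ extremally (for example, as a maximum antichain in the $x$-projection order rather than an arbitrary $(m+1)$-element one), and then show that every rectangle in $\mathfrak{R} \setminus \mathfrak{R}'$ not pierced by the chosen horizontal lines is pinned into a specific positional relationship with some member of $\mathfrak{R}'$ inside one of the slabs carved out by those lines. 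Any independent configuration of size $m$ among the residuals would then, together with a carefully chosen member of $\mathfrak{R}'$, extend to an independent configuration of size $m+1$ in $\mathfrak{R}$, contradicting the hypothesis. The delicate combinatorial bookkeeping needed to enforce this extension — essentially, ruling out all the ways a would-be $(m+1)$-independent set could be blocked — is the technical heart of the argument, and I expect it to absorb the bulk of the effort.
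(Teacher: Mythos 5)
The paper does not prove this statement at all: it is quoted, with a citation, as a theorem of Gy\'arf\'as and Lehel, so your attempt has to be judged on its own merits rather than against an argument of the authors. The first half of your plan is sound: Gallai's theorem for intervals is correctly invoked, both to dispose of the case where the $x$-projections contain no $m+1$ pairwise disjoint members, and to produce at most $m$ horizontal lines slicing a family $\mathfrak{R}'$ of pairwise $x$-disjoint rectangles. The genuine gap is the step you yourself flag as the technical heart: the claim that the residual (unsliced) family has independence number at most $m-1$, because any independent $m$-set of residuals could be completed to an independent $(m+1)$-set by a member of $\mathfrak{R}'$. This claim is false --- not merely delicate --- and no choice of $\mathfrak{R}'$ or of the horizontal lines repairs it.

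Concretely, take $m=2$ and the five rectangles
\[
r_1=[1,2]\times[0,1],\quad r_2=[5,6]\times[0,1],\quad r_3=[9,10]\times[4,5],\quad s_1=[0,7]\times[2,2.5],\quad s_2=[8,11]\times[3,3.5].
\]
Checking the ten pairs, the independent pairs are exactly $\{r_1,r_3\}$, $\{r_2,r_3\}$, $\{r_3,s_1\}$, $\{r_1,s_2\}$, $\{r_2,s_2\}$, $\{s_1,s_2\}$, and this list of pairs contains no triangle, so no three rectangles are pairwise independent and the hypothesis holds with $m=2$. The only families of three pairwise $x$-disjoint rectangles are $\{r_1,r_2,r_3\}$ and $\{r_1,r_2,s_2\}$ (both maximum). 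Take $\mathfrak{R}'=\{r_1,r_2,r_3\}$: its $y$-projections are $[0,1],[0,1],[4,5]$, so any set of at most two horizontal lines piercing them must have one line at a height in $[0,1]$ and one at a height in $[4,5]$, and such lines miss both $s_1$ and $s_2$. The residual $\{s_1,s_2\}$ is therefore an independent set of size $m$, yet no member of $\mathfrak{R}'$ extends it: $r_1$ and $r_2$ meet $s_1$ in $x$-projection, and $r_3$ meets $s_2$ in $x$-projection. The other choice $\mathfrak{R}'=\{r_1,r_2,s_2\}$ fails identically (residual $\{r_3,s_1\}$, all extensions blocked by $x$-overlaps). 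So the independence number of the residual does not drop, and the recursion $f(m)\le cm+f(m-1)$ never gets started; the structural reason is that a single residual rectangle can be dependent with many members of $\mathfrak{R}'$ through its $x$-projection alone, and the hypothesis on $\mathfrak{R}$ does not prevent $m$ such rectangles from jointly blocking all of $\mathfrak{R}'$. Note that the statement is equivalent to bounding the point-transversal number of a family of $2$-intervals (each rectangle contributing its $x$-projection and its $y$-projection, placed on two disjoint lines) by a function of its matching number; this is exactly what Gy\'arf\'as and Lehel proved, and the known arguments --- theirs, and Tardos's later sharp bound $\tau\le 2\nu$ --- rest on genuinely different ideas than this induction.
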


Our next two results both rest upon Theorem~\ref{thm-slice}.

\begin{proposition}
\label{prop-simples-griddable}
For every $\ell$, the simple permutations in $\Av(24153, 31524, \ell\cdots 21)$ are contained in $\Grid(M)$ for a finite $0/1$ matrix $M$.
\end{proposition}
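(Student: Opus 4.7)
The plan is to adapt the Gy\'arf\'as--Lehel rectangle-slicing approach used in Vatter~\cite{vatter:small-permutati:} so as to apply Proposition~\ref{prop-mono-rectangles} with only increasing cells. Fix a simple $\pi \in \Av(24153, 31524, \ell\cdots 21)$. By Proposition~\ref{prop-simples-n1} together with the inversion symmetry of the class noted immediately after it, we may assume that the leftmost entry of $\pi$ lies above its rightmost entry; denote these extremal entries $L$ and $R$. Call an axis-parallel rectangle \emph{non-increasing} if the entries of $\pi$ inside it contain an inversion, and let $\mathfrak{R}(\pi)$ be the family of all such rectangles. Any collection of horizontal and vertical lines slicing every member of $\mathfrak{R}(\pi)$ defines a gridding of $\pi$ all of whose non-empty cells are monotone increasing. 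As in the discussion preceding Proposition~\ref{prop-mono-rectangles} (and footnote~\ref{fn-matrix-sum}), a bound on the number of slicing lines that does not depend on $\pi$ translates into a single finite $0/1$ matrix $M$ with every such $\pi$ contained in $\Grid(M)$. By Theorem~\ref{thm-slice}, the task therefore reduces to bounding, by a function of $\ell$ alone, the size of the largest independent sub-family of $\mathfrak{R}(\pi)$.

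Suppose toward contradiction that $Q_1, \dots, Q_k$ are pairwise independent non-increasing rectangles in $\pi$ with $k$ arbitrarily large. Viewing the rectangles as a permutation via their disjoint projections and applying Erd\H{o}s--Szekeres, a sub-family of size $\sqrt{k}$ is in monotone arrangement. If that arrangement is decreasing, then selecting any inversion $(a_i, b_i)$ with $a_i>b_i$ from each chosen $Q_i$ produces a decreasing sequence of length $2\sqrt{k}$ in $\pi$, which once $k>(\ell/2)^2$ realises the forbidden pattern $\ell\cdots 21$. This disposes of the decreasing case at once.

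The crux is the opposite case, in which many of the $Q_i$ are in increasing arrangement; selecting inversions then witnesses the pattern $21^{\oplus k}$ inside $\pi$. To rule this out we exploit simplicity: each two-entry ``block'' $\{a_i,b_i\}$ of the embedded $21^{\oplus k}$ would be an interval of $\pi$ itself were there no additional entry of $\pi$ separating $a_i$ from $b_i$ in either position or value, so simplicity yields such a \emph{witness} entry for every block. The plan is to classify each witness by its location relative to $L$, $R$, and the neighbouring blocks $B_{i-1}, B_{i+1}$, and to show that once $k$ exceeds a constant depending only on $\ell$, some witness combines with entries drawn from two nearby blocks (and possibly with $L$ or $R$) to produce an occurrence of $24153$ or $31524$. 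The technical heart of the argument, and what I expect to be the main obstacle, is precisely this combinatorial case analysis: the forbidden patterns $24153$ and $31524$ are close to the pattern $2143$ formed by the endpoints of any two blocks, so distinguishing which witness configurations are genuinely unavoidable requires delicate bookkeeping of which of the four extremal positions $(L,R,B_1,B_k)$ each witness lies above, below, left of, or right of.
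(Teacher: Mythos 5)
Your scaffolding is sound: reducing the proposition via Proposition~\ref{prop-mono-rectangles} and Theorem~\ref{thm-slice} to bounding the independence number of the inversion-containing rectangles is valid (for an independent family each slicing line meets at most one rectangle, so bounded slicing and bounded independence are genuinely equivalent), and your decreasing case is correct. But the increasing case is a real gap, not deferred bookkeeping --- it is where the entire content of the proposition lives. The difficulty is that simplicity hands you one separating entry per block $\{a_i,b_i\}$ with \emph{no control over its location}: the witness may lie inside the rectangular hull of $a_i$ and $b_i$, or just outside it but still within the horizontal and vertical strips spanned by that block alone. Such a witness forms a local $321$, $231$, or $312$ with its own block and creates no pattern whatsoever with any other block: an occurrence of $24153$ requires two inversions with overlapping \emph{position} spans plus a high separator, and $31524$ requires two inversions with overlapping \emph{value} spans plus a separator to the right, whereas the blocks of $21^{\oplus k}$ have pairwise disjoint spans in both directions, and purely local witnesses never manufacture the needed overlap. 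Of course, a locally witnessed block then forms a three-point set that may itself be an interval requiring further separation, and so on; but this cascade is exactly what your one-witness-per-block classification does not capture, and controlling it is the actual theorem.

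The paper's proof solves precisely this location problem, and it does so with an ingredient your proposal lacks entirely: induction on $\ell$. Using Proposition~\ref{prop-simples-n1}, it splits $\pi$ into $\pi_t$ (entries above the rightmost entry) and $\pi_b$ (entries below the leftmost entry); each avoids $(\ell-1)\cdots 21$, so by induction the simple permutations underlying $\pi_t$ and $\pi_b$ admit small increasing-rectangle covers, which stretch to a preliminary gridding of $\pi$. The ``bad'' rectangles are then defined relative to this gridding as those containing a decreasing \emph{interval of $\pi_t$ or of $\pi_b$} --- not merely an inversion. This is the crucial move: no entry of $\pi_t$ can separate an interval of $\pi_t$, so simplicity of $\pi$ forces the separator into $\pi_b\setminus\pi_t$, i.e.\ below the rightmost entry and hence \emph{below every point of $\pi_t$}, horizontally inside its rectangle. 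With separator locations pinned down globally in this way, Erd\H{o}s--Szekeres applied twice (first to the bad rectangles, then to their separators) produces two increasing bad rectangles with two increasing separators lying below both, and that configuration is a copy of $31524$ (or, on the $\pi_b$ side, of $24153$). Without some substitute for this location control --- which your one-shot, non-inductive framework does not provide --- the case analysis you anticipate cannot be closed.
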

\begin{proof}
By Proposition~\ref{prop-mono-rectangles} it suffices to show that there is a function $g(\ell)$ such that every permutation in $\Av(24153, 31524, \ell\cdots 21)$ can be covered by $g(\ell)$ increasing axis-parallel rectangles (i.e., rectangles which only cover increasing sets of points). We prove this statement by induction on $\ell$. For the base case, we can take $g(2)=1$. Now take a simple permutation $\pi\in\Av(24153, 31524, \ell\cdots 21)$ for $\ell\ge 3$ and suppose that the claim holds for $\ell-1$.

\begin{figure}[t]
\begin{center}
\begin{tabular}{ccccc}

\psset{xunit=0.017in, yunit=0.017in, runit=0.03in}
\psset{linewidth=0.005in}
\begin{pspicture}(-17,0)(107,90)
\psline[linecolor=black]{c-c}(0,30)(90,30)
\psline[linecolor=black]{c-c}(0,60)(90,60)
\psline[linecolor=darkgray,linestyle=solid,linewidth=0.02in](0,0)(90,0)(90,90)(0,90)(0,0)
\pscircle*(0,60){1}
\pscircle*(90,30){1}
\rput[tl](-10,60){\rotateright{$\underbrace{\rule{60\psxunit}{0in}}$}}
\rput[r](-10,30){$\pi_b$}
\rput[tr](100,90){\rotateright{$\overbrace{\rule{60\psxunit}{0in}}$}}
\rput[l](100,60){$\pi_t$}
\end{pspicture}
&\rule{0in}{0pt}&
\psset{xunit=0.017in, yunit=0.017in, runit=0.03in}
\psset{linewidth=0.005in}
\begin{pspicture}(0,0)(90,90)
\rput(25,40){
	\psline[linecolor=darkgray,linestyle=solid,linewidth=0.02in]{c-c}(0,0)(10,0)(10,10)(0,10)(0,0)
	\pscircle*(0,10){1}
	\pscircle*[linecolor=white](10,0){1}
	\pscircle(10,0){1}
	\psline[linecolor=black]{c-c}(5,-30)(5,5)
	\pscircle*(5,-30){1}
}
\rput(55,70){
	\psline[linecolor=darkgray,linestyle=solid,linewidth=0.02in]{c-c}(0,0)(10,0)(10,10)(0,10)(0,0)
	\pscircle*(0,10){1}
	\pscircle*(10,0){1}
	\psline[linecolor=black]{c-c}(5,-50)(5,5)
	\pscircle*(5,-50){1}
}
\psline[linecolor=black]{c-c}(0,30)(90,30)
\psline[linecolor=black]{c-c}(0,60)(90,60)
\psline[linecolor=darkgray,linestyle=solid,linewidth=0.02in](0,0)(90,0)(90,90)(0,90)(0,0)
\pscircle*[linecolor=white](0,60){1}
\pscircle(0,60){1}
\pscircle*[linecolor=white](90,30){1}
\pscircle(90,30){1}
\end{pspicture}
&\rule{0in}{0pt}&
\psset{xunit=0.017in, yunit=0.017in, runit=0.03in}
\psset{linewidth=0.005in}
\begin{pspicture}(0,0)(90,90)
\rput(25,10){
	\psline[linecolor=darkgray,linestyle=solid,linewidth=0.02in]{c-c}(0,0)(10,0)(10,10)(0,10)(0,0)
	\pscircle*(0,10){1}
	\pscircle*(10,0){1}
	\psline[linecolor=black]{c-c}(5,60)(5,5)
	\pscircle*(5,60){1}
}
\rput(55,40){
	\psline[linecolor=darkgray,linestyle=solid,linewidth=0.02in]{c-c}(0,0)(10,0)(10,10)(0,10)(0,0)
	\pscircle*[linecolor=white](0,10){1}
	\pscircle(0,10){1}
	\pscircle*(10,0){1}
	\psline[linecolor=black]{c-c}(5,40)(5,5)
	\pscircle*(5,40){1}
}
\psline[linecolor=black]{c-c}(0,30)(90,30)
\psline[linecolor=black]{c-c}(0,60)(90,60)
\psline[linecolor=darkgray,linestyle=solid,linewidth=0.02in](0,0)(90,0)(90,90)(0,90)(0,0)
\pscircle*[linecolor=white](0,60){1}
\pscircle(0,60){1}
\pscircle*[linecolor=white](90,30){1}
\pscircle(90,30){1}
\end{pspicture}

\end{tabular}
\end{center}
\caption{On the left, the division of $\pi$ used in Proposition~\ref{prop-simples-griddable}. In the centre and on the right, the final contradictions in the proof.}
\label{fig-simples-griddable}
\end{figure}
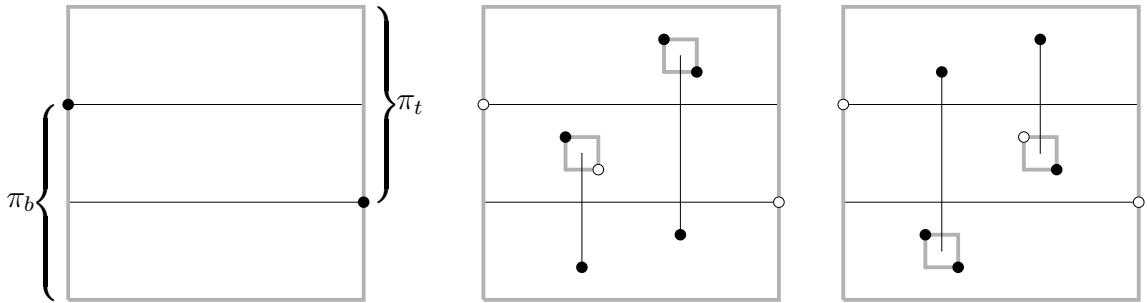

By Proposition~\ref{prop-simples-n1}, we may assume that the leftmost entry of $\pi$ lies above its rightmost entry. Let $\pi_t$ be the permutation formed by all entries of $\pi$ lying above its rightmost entry and $\pi_b$ the permutation formed by all entries of $\pi$ lying below its leftmost entry (as shown in Figure~\ref{fig-simples-griddable}). Thus every entry of $\pi$ corresponds to an entry in $\pi_t$, to an entry in $\pi_b$, or to entries in both permutations. Moreover, both $\pi_t$ and $\pi_b$ avoid $(\ell-1)\cdots 21$.

We would like to apply induction to find monotone rectangle coverings of both $\pi_t$ and $\pi_b$ but of course these permutations needn't be simple. Nevertheless, if $\pi_t$ is the inflation of the simple permutation $\sigma_t$ and $\pi_b$ of $\sigma_b$ then both $\sigma_t$ and $\sigma_b$ can be covered by $g(\ell-1)$ increasing axis-parallel rectangles by induction. Now we stretch these rectangles so that they cover the corresponding regions of $\pi$. By adapting the proof of Proposition~\ref{prop-mono-rectangles}, we may then extend this rectangle covering to a gridding of size at most $4g(\ell-1)\times 4g(\ell-1)$. While this gridding needn't be monotone, inside each of its cells we see points which correspond either to inflations of increasing sequences of $\sigma_t$ or of $\sigma_b$. Let $\mathfrak{L}$ denote the grid lines of this gridding.

We now say that the axis-parallel rectangle $R$ is \emph{bad} if it is fully contained in a cell of the above gridding and the points it covers contain a decreasing interval in either $\pi_t$ or $\pi_b$. Further let $\mathfrak{R}$ denote the collection of all bad rectangles. We aim to show that there is a collection of $f(2\ell(\ell-1))$ lines which slice every bad rectangle, where $f$ is the function defined in Theorem~\ref{thm-slice}. This, together with Proposition~\ref{prop-mono-rectangles} and the comments before it, will complete the proof because these lines together with $\mathfrak{L}$ will give a gridding of $\pi$ of bounded size.

Theorem~\ref{thm-slice} will give us the desired lines if we can show that $\mathfrak{R}$ has no independent set of size greater than $2\ell(\ell-1)+1$. Suppose to the contrary that $\mathfrak{R}$ does contain an independent set of this size. Thus at least $\ell(\ell-1)+1$ of these bad rectangles lie in one of $\pi_t$ or $\pi_b$; suppose first that these $\ell(\ell-1)+1$ bad rectangles lie in $\pi_t$. Because $\pi_t$ avoids $(\ell-1)\cdots 21$, the Erd\H{o}s-Szekeres Theorem implies that at least $\ell+1$ of its bad rectangles occur in increasing order (when read from left to right). Because $\pi$ itself is simple, each such rectangle must be separated, and this separating point must lie in $\pi_b\setminus\pi_t$ since (by definition) the points inside this bad rectangle form a decreasing interval in $\pi_t$. Appealing once more to Erd\H{o}s-Szekeres we see that two such separating points must themselves lie in increasing order, as shown in the centre of Figure~\ref{fig-simples-griddable}. However, this is a contradiction to our assumption that $\pi$ avoids $31524$ (given by the solid points). As shown on the rightmost pane of this figure, if the $\ell(\ell-1)+1$ bad rectangles lie in $\pi_b$ we instead find a copy of $24153$.
\end{proof}

A \emph{submatrix} of a matrix is obtained by deleting any collection of rows and columns from the matrix. Our next result shows that in $\Av(24153, 31524, \ell\cdots 21)$ the simple permutations can be gridded in a matrix which does not contain  $\fnmatrix{rr}{1&1\\1&*}$ as a submatrix, i.e., in this matrix, there is no non-zero cell with both a non-zero cell below it in the same column, and a non-zero cell to its right in the same row. (The $*$ indicates an entry that can be either 0 or 1.)

The following result is in some sense the technical underpinning of our entire argument. We advise the reader to note during the proof that if the hulls in $\HH$ are assumed to be increasing, then the resulting gridding matrix $M$ will be $0/1$, not $\zpm$.

\begin{proposition}
\label{prop-chop-hulls}
Suppose that $\pi$ is a permutation and $\HH$ is a collection of $m$ monotone rectangular hulls which cover the entries of $\pi$ satisfying
\begin{enumerate}
\item[(H1)] the hulls in $\HH$ are pairwise nonintersecting,
\item[(H2)] no single vertical or horizontal line slices through more than $k$ hulls in $\HH$, and
\item[(H3)] no hull in $\HH$ is dependent both with a hull to its right and a hull beneath it.
\end{enumerate}
Then there exists a function $f(m,k)$ such that $\pi$ is $M$-griddable for a $\zpm$ matrix $M$ of size at most $f(m,k)\times f(m,k)$ which does not contain $\fnmatrix{rr}{\pm1&\pm1\\\pm1&*}$ as a submatrix.
\end{proposition}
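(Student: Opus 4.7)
The plan is to construct $M$ in two stages: build a preliminary gridding from the hull boundaries, then refine it via Theorem~\ref{thm-slice} to eliminate every forbidden $\fnmatrix{rr}{\pm1&\pm1\\\pm1&*}$ submatrix. For the preliminary gridding $M_0$ I would draw vertical lines at the left and right $x$-coordinates of each hull and horizontal lines at the top and bottom $y$-coordinates, producing at most $2m+1$ columns and rows. By (H1), every nonempty cell of $M_0$ lies in exactly one hull and is assigned $\pm 1$ according to that hull's monotonicity type.

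The second stage begins with a case analysis on a putative bad submatrix at cells $(i,j), (i',j), (i,j')$ (with $i<i'$ and $j'<j$) in $M_0$. Writing $H_1, H_2, H_3$ for the hulls containing these cells, the case of three distinct hulls is immediately ruled out by (H3): $H_1$ would be simultaneously row-dependent with $H_2$ (to its right) and column-dependent with $H_3$ (beneath it). The case $H_2=H_3\neq H_1$ is ruled out by (H1), since $H_2$'s rectangular hull would then contain both $(i',j)$ and $(i,j')$ and hence also $(i,j)$, intersecting $H_1$. Every surviving L-configuration therefore arises from a single hull $H$ spanning two or more nonempty cells of $M_0$ along a row or column, possibly together with one other hull.

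To destroy these remaining intra-hull L-configurations using only $f(m,k)$ additional grid lines --- the main step and principal obstacle --- I would assign to each potential L-configuration an obstruction rectangle whose slicing by a horizontal or vertical line separates the offending pair of $H$-points into distinct rows or columns of the refined gridding, killing the associated L at the level of the matrix. Placing these into a collection $\mathfrak{R}$, the key technical step is to bound the independence number of $\mathfrak{R}$ by a function of $m$ and $k$: condition (H2) caps how many hulls a single line may cross, and hence how many bad rectangles can stack independently along either axis, while (H3) and (H1) restrict the geometric configurations of hulls that can contribute independent obstructions. Once this bound is established, Theorem~\ref{thm-slice} yields $f(m,k)$ slicing lines which, combined with the $\leq 4m$ preliminary lines, produce the required matrix $M$. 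I would define $\mathfrak{R}$ purely in terms of the hull structure, rather than of $M_0$, so that a single pass of Theorem~\ref{thm-slice} suffices and no new L-configurations can arise after refinement.
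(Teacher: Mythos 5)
Your stage-1 setup (the preliminary gridding from hull boundaries) and your case analysis for L-configurations spanning distinct hulls are correct and match the verification the paper performs at the end of its own proof: (H1) and (H3) do rule out all inter-hull instances of $\fnmatrix{rr}{\pm1&\pm1\\\pm1&*}$. But the entire technical content of the proposition lies in the intra-hull case, which you leave as a plan rather than a proof: you never define the obstruction rectangles, never prove the independence bound, and --- the fatal point --- the strategy of a single slicing pass via Theorem~\ref{thm-slice} cannot work, because the ``no forbidden submatrix'' property is not monotone under adding grid lines. Consider an increasing hull containing the points $(1,1),(2,2),(3,3)$: with no interior lines there is no forbidden pattern, but adding one horizontal line between heights $1$ and $2$ and one vertical line between positions $2$ and $3$ creates exactly the forbidden L. Killing an L-configuration requires a line of a specific orientation at a \emph{matched} position (a vertical cut through a monotone hull forces a horizontal partner at the precise value separating the points on either side), whereas Theorem~\ref{thm-slice} only guarantees that each rectangle in $\mathfrak{R}$ is hit by some line somewhere; the lines it returns can themselves create new L's inside hulls. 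Hence your closing claim --- that defining $\mathfrak{R}$ ``purely in terms of the hull structure'' ensures no new L-configurations arise after refinement --- is unsupported, and the example shows it is false in general, since the new L is created by line placement alone, independently of how $\mathfrak{R}$ was chosen.

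The paper's proof does not use Theorem~\ref{thm-slice} here at all (that tool is used in Proposition~\ref{prop-gridding-corners}, to manufacture hulls satisfying (H3) in the first place). Instead it extends hull sides to gridlines and then, whenever a line slices a monotone hull, inserts the \emph{induced} perpendicular gridline at exactly the position that splits the hull into two diagonal pieces (bottom-left/top-right for an increasing hull). This cascades --- the induced line may slice further hulls, forcing further induced lines --- and the heart of the argument is that the cascade terminates with boundedly many lines: along any propagation path one records whether each newly sliced hull lies up, down, left, or right of the previous one; the letters alternate between $\{\sfu,\sfd\}$ and $\{\sfl,\sfr\}$, and (H3) forbids the factors $\sfu\sfr$ and $\sfl\sfd$, so the word is a $\sfd\sfr$-alternation followed by a $\sfu\sfl$-alternation, each of length at most $m-1$; combined with (H2), which caps the branching of the propagation tree at $k$, this bounds the total number of gridlines by roughly $4mk^{2m}$. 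The resulting diagonality of each hull's cells kills all intra-hull L's, and (H1)/(H3) kill the rest. Your proposal contains no analogue of the induced-line mechanism or of this termination argument, and those are precisely where hypotheses (H2) and (H3) do their real work.
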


\begin{proof}
We define a gridding of $\pi$ using the sides of the hulls in $\HH$. For a given side from a given hull, form a gridline by extending it to the edges of the permutation. By our hypothesis (H2), this line can slice at most $k$ other hulls.

Whenever a hull is sliced in this way, a second gridline, perpendicular to the first, is induced so that all of the entries within the hull are contained in the bottom left and top right quadrants (for a hull containing increasing entries), or top left and bottom right quadrants (for a hull containing decreasing entries) defined by the two lines. This second gridline may itself slice through the interior of at most $k$ further hulls in $\HH$, and each such slice will induce another gridline, and so on. We call this process the \emph{propagation} of a line. See Figure~\ref{fig-propagation} for an illustration. For a given propagation sequence, the \emph{propagation tree} has gridlines for vertices, is rooted at the original gridline for the side, and has an edge between two gridlines if one induces the other in this propagation.
\begin{figure}
\centering
\begin{tabular}{ccc}
\psset{xunit=0.015in, yunit=0.015in}
\psset{linewidth=0.005in}
\begin{pspicture}(0,0)(150,115)
\rput(0,15){\psline[linecolor=darkgray,linestyle=solid,linewidth=0.01in]{c-c}(0,0)(150,0)}
\rput(0,55){\psline[linecolor=darkgray,linestyle=solid,linewidth=0.01in]{c-c}(0,0)(150,0)}
\rput(0,75){\psline[linecolor=darkgray,linestyle=solid,linewidth=0.01in]{c-c}(0,0)(150,0)}
\rput(0,90){\psline[linecolor=darkgray,linestyle=solid,linewidth=0.01in]{c-c}(0,0)(150,0)}
\rput(0,105){\psline[linecolor=darkgray,linestyle=solid,linewidth=0.01in]{c-c}(0,0)(150,0)}
\rput(10,0){\psline[linecolor=darkgray,linestyle=solid,linewidth=0.01in]{c-c}(0,0)(0,115)}
\rput(25,0){\psline[linecolor=darkgray,linestyle=solid,linewidth=0.01in]{c-c}(0,0)(0,115)}
\rput(40,0){\psline[linecolor=darkgray,linestyle=solid,linewidth=0.01in]{c-c}(0,0)(0,115)}
\rput(105,0){\psline[linecolor=darkgray,linestyle=solid,linewidth=0.01in]{c-c}(0,0)(0,115)}
\rput(135,0){\psline[linecolor=darkgray,linestyle=solid,linewidth=0.01in]{c-c}(0,0)(0,115)}
\rput(60,35){\psline[linecolor=black,linestyle=solid,linewidth=0.02in]{c-c}(0,0)(20,0)(20,20)(0,20)(0,0)}
\rput(5,10){\psline[linecolor=black,linestyle=solid,linewidth=0.02in]{c-c}(0,0)(10,0)(10,10)(0,10)(0,0)}
\rput(130,10){\psline[linecolor=black,linestyle=solid,linewidth=0.02in]{c-c}(0,0)(10,0)(10,10)(0,10)(0,0)}
\rput(20,50){\psline[linecolor=black,linestyle=solid,linewidth=0.02in]{c-c}(0,0)(10,0)(10,10)(0,10)(0,0)}
\rput(100,50){\psline[linecolor=black,linestyle=solid,linewidth=0.02in]{c-c}(0,0)(10,0)(10,10)(0,10)(0,0)}
\rput(130,50){\psline[linecolor=black,linestyle=solid,linewidth=0.02in]{c-c}(0,0)(10,0)(10,10)(0,10)(0,0)}
\rput(5,70){\psline[linecolor=black,linestyle=solid,linewidth=0.02in]{c-c}(0,0)(10,0)(10,10)(0,10)(0,0)}
\rput(20,85){\psline[linecolor=black,linestyle=solid,linewidth=0.02in]{c-c}(0,0)(10,0)(10,10)(0,10)(0,0)}
\rput(35,100){\psline[linecolor=black,linestyle=solid,linewidth=0.02in]{c-c}(0,0)(10,0)(10,10)(0,10)(0,0)}
\rput(130,100){\psline[linecolor=black,linestyle=solid,linewidth=0.02in]{c-c}(0,0)(10,0)(10,10)(0,10)(0,0)}
\end{pspicture}
&\rule{10pt}{0pt}&
\psset{xunit=0.015in, yunit=0.015in}
\psset{linewidth=0.005in}
\begin{pspicture}(0,-20)(70,90)
\cnode(30,80){0.04in}{a}
\cnode*(10,60){0.04in}{b}
\cnode*(10,40){0.04in}{c}
\cnode*(30,60){0.04in}{d}
\cnode*(50,60){0.04in}{e}
\cnode*(40,40){0.04in}{f}
\cnode*(40,20){0.04in}{g}
\cnode*(60,40){0.04in}{h}
\cnode*(60,20){0.04in}{i}
\cnode*(60,0){0.04in}{j}
\ncline{a}{b}
\ncline{a}{d}
\ncline{a}{e}
\ncline{b}{c}
\ncline{e}{f}
\ncline{e}{h}
\ncline{f}{g}
\ncline{h}{i}
\ncline{i}{j}
\end{pspicture}
\end{tabular}
\caption{Propagating a side of a hull in $\HH$, and its corresponding propagation tree.}
\label{fig-propagation}
\end{figure}
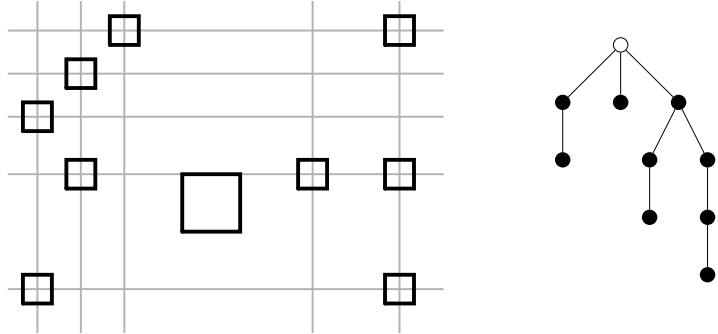

Before moving on, we note that it is clear that the propagation tree is connected, but less obvious that it is in fact a tree. This is not strictly required in our argument (we will only need to bound the number of vertices it contains), but if the tree were to contain a cycle it would have to correspond to a cyclic sequence of hulls, and this is impossible without contradicting hypothesis (H3).

In order to bound the size of a propagation tree, we first show that it has height at most $2m-1$. In the propagation tree of a side from some hull $H_0\in\HH$, take a sequence $H_1,H_2,\dots,H_p$ of hulls from $\HH$ corresponding to a longest path in the propagation tree, starting from the root. Thus, $H_1$ is sliced by the initial gridline formed from the side of $H_0$, and $H_i$ is sliced by the gridline induced from $H_{i-1}$ for $i=2,\dots,p$.

We now define a word $w=w_1w_2\cdots w_p$ from this sequence, based on the position of hull $H_i$ relative to hull $H_{i-1}$. For $i=1,2,\dots,p$, let
\[
w_i = \begin{cases}
\sfu & \text{if }H_i\text{ lies above }H_{i-1}\\
\sfd & \text{if }H_i\text{ lies below }H_{i-1}\\
\sfl & \text{if }H_i\text{ lies to the left of }H_{i-1}\\
\sfr & \text{if }H_i\text{ lies to the right of }H_{i-1}\\
\end{cases}
\]
Note that by the process of inducing perpendicular gridlines, successive letters in $w$ must alternate between $\{\sfu,\sfd\}$ and $\{\sfl,\sfr\}$. Moreover, since no hull interacts with hulls both below it and to its right, $w$ cannot contain a $\sfu\sfr$ or $\sfl\sfd$ factor. In other words, after the first instance of $\sfu$ or $\sfl$, there are no more instances of $\sfr$ or $\sfd$. This means that $w$ consists of a (possibly empty) alternating sequence $\sfd\sfr\sfd\sfr\cdots$ or $\sfr\sfd\sfr\sfd\cdots$ followed by a (possibly empty) alternating sequence $\sfu\sfl\sfu\sfl\cdots$ or $\sfl\sfu\sfl\sfu\cdots$.

Any alternating sequence of the form $\sfd\sfr\sfd\sfr\cdots$ or $\sfr\sfd\sfr\sfd\cdots$ can have at most $m-1$ letters, as each hull in $\HH$ (other then $H_0$) can be sliced at most once in such a sequence. Similarly, any alternating sequence of the form $\sfu\sfl\sfu\sfl\cdots$ or $\sfl\sfu\sfl\sfu\cdots$ can have at most $m-1$ letters. Consequently, we have $p\leq 2m-2$, and thus every propagation sequence has length at most $2m-1$, as required.

Since each gridline in the propagation tree has at most $k$ children, this means that the propagation tree for any given side has at most
\[1 + k + k^2 + \cdots + k^{2m-1} < k^{2m}\]
vertices, yielding a gridding of $\pi$ with fewer than $4mk^{2m}$ gridlines, and we may take this number to be $f(m,k)$. The gridding matrix $M$ is then naturally formed from the cells of this gridding of $\pi$: each empty cell corresponds to a $0$ in $M$, each cell containing points in decreasing order corresponds to a $-1$ in $M$, and each cell containing points in increasing order corresponds to a $1$ in $M$.

Finally, we verify that $M$ satisfies the conditions in the proposition. The process of propagating gridlines ensures that each rectangular hull in $\HH$ is divided into cells no two of which occupy the same row or column of the $M$-gridding. This means that there can be no  $\fnmatrix{rr}{\pm1&\pm1\\\pm1&*}$ submatrix of $M$ with two cells originating from the same hull in $\HH$. Thus, the cells of a submatrix of the form $\fnmatrix{rr}{\pm1&\pm1\\\pm1&*}$ must be made up from points in distinct hulls in $\HH$, but this is impossible since $\HH$ contains no hulls which are dependent with hulls both below it and to its right.
\end{proof}

We now apply this proposition to refine the gridding provided to us by Proposition~\ref{prop-simples-griddable}.

\begin{proposition}
\label{prop-gridding-corners}
For every $\ell$, the simple permutations in $\Av(24153, 31524, \ell\cdots 21)$ are contained in $\Grid(M)$ for a finite $0/1$ matrix $M$ which does not contain $\fnmatrix{rr}{1&1\\1&*}$ as a submatrix.
\end{proposition}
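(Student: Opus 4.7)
The plan is to apply Proposition~\ref{prop-chop-hulls} to each simple $\pi\in\Av(24153,31524,\ell\cdots 21)$ using a collection $\HH$ of \emph{increasing} rectangular hulls. As the advisory comment immediately before Proposition~\ref{prop-chop-hulls} notes, if every hull in $\HH$ is increasing then the output matrix $M$ is automatically $0/1$ rather than $\zpm$, so avoiding the submatrix $\fnmatrix{rr}{\pm1&\pm1\\\pm1&*}$ specialises exactly to avoiding $\fnmatrix{rr}{1&1\\1&*}$, which is the desired conclusion.

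To produce $\HH$, I would start from a gridding of $\pi$ guaranteed by Proposition~\ref{prop-simples-griddable}. Fix an $M_0$-gridding for the $0/1$ matrix $M_0$ that proposition produces, and let $\HH$ consist of the rectangular hull of the points in each non-empty cell. Each such hull is increasing since $M_0$ is $0/1$. Hypothesis (H1) is immediate because cells of a gridding have pairwise disjoint interiors, so their point-hulls do too. Hypothesis (H2) holds with $k$ bounded by the larger dimension of $M_0$, itself bounded in terms of $\ell$.

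The main obstacle is hypothesis (H3). If it fails there are three non-empty cells with hulls $H$, $H'$, $H''$ in positions $(k_1,\ell_2)$, $(k_2,\ell_2)$, $(k_1,\ell_1)$ with $k_1<k_2$ and $\ell_1<\ell_2$, where $H'$ is $y$-dependent with $H$ and $H''$ is $x$-dependent with $H$. Each hull is the rectangular hull of an increasing sequence, so its bottom-left and top-right corners are genuine entries of $\pi$. Combined with an entry of $H'$ whose $y$-coordinate lies in the $y$-range of $H$ and an entry of $H''$ whose $x$-coordinate lies in the $x$-range of $H$, these four entries already form a copy of $2143$ in $\pi$. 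The plan is to extend this to a full $24153$ or $31524$, either by choosing a further interior entry from the increasing sequence in one of the three hulls or by invoking the simplicity of $\pi$ to force an additional entry in a distinguished region of the plot (following the same sort of case analysis as in the proof of Proposition~\ref{prop-simples-n1}), and thereby obtain a contradiction.

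When such a direct extension cannot be found, I would mirror the ``bad rectangle'' strategy from the proof of Proposition~\ref{prop-simples-griddable}: record one axis-parallel rectangle per (H3)-violation, use the Erd\H{o}s--Szekeres theorem to show that any sufficiently large independent family of such rectangles forces $\ell\cdots 21$, $24153$, or $31524$, and then invoke Theorem~\ref{thm-slice} to slice the remaining violations with a bounded number of additional horizontal and vertical gridlines. Refining $M_0$ by these extra lines produces a new gridding whose cell-hulls satisfy (H3), after which Proposition~\ref{prop-chop-hulls} delivers the finite $0/1$ matrix $M$ of the required form and concludes the proof.
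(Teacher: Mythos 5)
Your proposal follows the paper's skeleton (grid via Proposition~\ref{prop-simples-griddable}, slice troublesome rectangles via Theorem~\ref{thm-slice}, then apply Proposition~\ref{prop-chop-hulls} to increasing hulls), but there is a genuine gap in how you handle (H3). Your primary strategy --- extracting a copy of $24153$ or $31524$ from a \emph{single} (H3)-violation among the cell hulls, using an extra interior entry or simplicity --- cannot work. A hull of increasing entries that is dependent with a hull beneath it and a hull to its right yields only a $2143$-type configuration: two entries of the hull, one splitting point below, one splitting point to the right. No fifth entry is forced, and simplicity does not forbid this configuration; simple permutations in $\Av(24153,31524,\ell\cdots 21)$ genuinely can have cell hulls violating (H3). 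The forbidden pattern $24153$ appears only when there are \emph{two independent} such configurations inside the same monotone cell --- this is exactly what Figure~\ref{fig-bad-rectangles} of the paper shows --- so ``thereby obtain a contradiction'' is not available, and your whole argument must rest on the fallback.

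The fallback is the right idea (it is essentially the paper's proof), but it is imprecise at the step that actually matters. You record one rectangle per (H3)-violation of the original gridding and slice those. The paper instead defines the family $\mathfrak{R}$ of \emph{all} bad rectangles --- axis-parallel rectangles inside a cell containing two entries split both by a point below and a point to the right --- and slices every member of $\mathfrak{R}$. This universality is what makes the final verification go through: any maximal unsliced piece whose hull violated (H3) would contain a bad rectangle, which would have been sliced, contradicting unslicedness. If you slice only one witness rectangle per violation of the \emph{original} cell hulls, a violation among the \emph{refined} hulls may be witnessed by entries belonging to no recorded rectangle, and nothing in your argument rules this out; as written, the claim that the refined cell-hulls satisfy (H3) is unjustified. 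Relatedly, your appeal to Erd\H{o}s--Szekeres for the independence bound is both vague and unnecessary: the correct bound is immediate, namely at most one independent bad rectangle per cell (two independent ones in a single cell form $24153$ outright), so any independent family has size at most $tu$ and Theorem~\ref{thm-slice} applies with $f(tu)$. Finally, note that Proposition~\ref{prop-chop-hulls} produces a matrix $M_\pi$ for each $\pi$ separately; to get the single matrix $M$ of the statement you still need the direct-sum observation of Footnote~\ref{fn-matrix-sum}, which your write-up elides.
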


\begin{proof}
Let $\pi$ be an arbitrary simple permutation in $\Av(24153, 31524, \ell\cdots 21)$. As we observed in Footnote~\ref{fn-matrix-sum}, it suffices to show that there are constants $a$ and $b$ such that $\pi\in\Grid(M)$ for an $a\times b$ matrix $M$ satisfying the desired conditions.

By Proposition~\ref{prop-simples-griddable}, $\pi$ is contained in $\Grid(N)$ for some $0/1$ matrix $N$, of size (say) $t \times u$. We say that a \emph{bad} rectangle within any specified cell is an axis-parallel rectangle which contains two entries which are split both by points below and to the right. Since $\pi$ does not contain 24153, no cell can contain more than one independent bad rectangle --- see Figure~\ref{fig-bad-rectangles}. Therefore an independent set of bad rectangles can have size at most $tu$, so Theorem~\ref{thm-slice} shows that the bad rectangles can all be sliced by $f(tu)$ lines.

\begin{figure}
\centering
\psset{xunit=0.017in, yunit=0.017in, runit=0.03in}
\psset{linewidth=0.005in}
\begin{pspicture}(0,0)(90,90)
\psline[linecolor=darkgray,linestyle=solid,linewidth=0.02in](0,50)(40,50)(40,90)(0,90)(0,50)
\rput(5,55){
	\psline[linecolor=darkgray,linestyle=solid,linewidth=0.02in]{c-c}(0,0)(10,0)(10,10)(0,10)(0,0)
	\pscircle*(0,0){1}
	\pscircle[fillstyle=solid,fillcolor=white](10,10){1}
	\psline[linecolor=black]{c-c}(5,-40)(5,5)
	\pscircle[fillstyle=solid,fillcolor=white](5,-40){1}
	\psline[linecolor=black]{c-c}(80,5)(5,5)
	\pscircle*(80,5){1}
}%
\rput(25,75){
	\psline[linecolor=darkgray,linestyle=solid,linewidth=0.02in]{c-c}(0,0)(10,0)(10,10)(0,10)(0,0)
	\pscircle*(0,0){1}
	\pscircle*(10,10){1}
	\psline[linecolor=black]{c-c}(5,-50)(5,5)
	\pscircle*(5,-50){1}
	\psline[linecolor=black]{c-c}(50,5)(5,5)
	\pscircle[fillstyle=solid,fillcolor=white](50,5){1}
}%
\end{pspicture}
\caption{Two independent bad rectangles. The filled points form a copy of $24153$, irrespective of the relative orders of the splitting points.}\label{fig-bad-rectangles}
\end{figure}
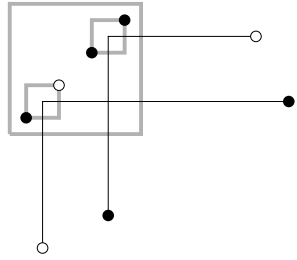

In any cell of the original gridding, the additional $f(tu)$ slices that have been added can at most slice these points into $f(tu)+1$ maximal unsliced pieces. In the entire permutation, therefore, these slices (together with the original gridlines from $N$) divide the points into at most $tu(f(tu)+1)$ maximal unsliced pieces. Let $\HH$ denote the rectangular hulls of these maximal unsliced pieces.

We now check that $\HH$ satisfies the hypotheses of Proposition~\ref{prop-chop-hulls}. Condition (H1) follows immediately by construction. Next, note that no two hulls of $\HH$ from the same cell of the $N$-gridding can be dependent, since every cell is monotone, so we may take $k=\max(t,u)$ to satisfy (H2). Finally, no hull can contain a bad rectangle (since all bad rectangles have been sliced), and so no hull can simultaneously be dependent with a hull from a cell below it, and a hull from a cell to its right, as required by (H3).

Now, applying Proposition~\ref{prop-chop-hulls} (noting that all the rectangular hulls in $\HH$ contain increasing entries), we have a $0/1$ gridding matrix $M_\pi$ for $\pi$, of dimensions at most $v\times w$ for some $v,w$, which does not contain $\fnmatrix{rr}{1&1\\1&*}$ as a submatrix. We are now done by our comments at the beginning of the proof.
\end{proof}

Having proved Proposition~\ref{prop-gridding-corners}, we merely need to put the pieces together to finish the proof of our main theorem. This proposition shows that there is a finite $0/1$ matrix $M$ with no submatrix of the form
$$
\fnmatrix{rr}{1&1\\1&*}
$$
such that the simple permutations of $\Av(24153, 31524, \ell\cdots 21)$ are contained in $\Grid(M)$. It follows that
$$
\Av(24153, 31524, \ell\cdots 21)\subseteq\langle\Grid(M)\rangle.
$$
Moreover, $M$ is a forest because if it were to contain a cycle, it would have to contain a submatrix of the form $\fnmatrix{rr}{1&1\\1&*}$. Therefore the permutation class $\Av(24153, 31524, \ell\cdots 21)$ is wqo by Theorem~\ref{thm-geom-simples-pwo-basis}.

\begin{theorem}
\label{thm-P5}
For every $\ell$, the permutation class $\Av(24153, 31524, \ell\cdots 21)$ is well-quasi-ordered. Therefore the class of permutation graphs omitting $P_5$ and $K_\ell$ is also well-quasi-ordered.
\end{theorem}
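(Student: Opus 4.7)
The proof is essentially an assembly of the machinery developed in Sections~\ref{sec-structural-tools} and~\ref{sec-p5-kell}. The plan is to apply Proposition~\ref{prop-gridding-corners} to obtain a finite $0/1$ matrix $M$, containing no $\fnmatrix{rr}{1&1\\1&*}$ submatrix, such that every simple permutation in $\C = \Av(24153, 31524, \ell\cdots 21)$ lies in $\Grid(M)$. The absence of a $\fnmatrix{rr}{1&1\\1&*}$ submatrix is precisely what forces the cell graph of $M$ to be acyclic: any cycle in the cell graph would have to contain a corner consisting of a nonzero cell with a nonzero cell below it in its column and a nonzero cell to its right in its row, which is exactly the forbidden pattern. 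Hence $M$ is a forest.

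Next, I would verify that $\C \subseteq \langle \Grid(M)\rangle$. Since $\C$ is closed under taking patterns, whenever $\pi \in \C$ is written as $\pi = \sigma[\alpha_1,\dots,\alpha_m]$ via the substitution decomposition of Proposition~\ref{simple-decomp-1}, the simple permutation $\sigma$ lies in $\C$ and therefore in $\Grid(M) \subseteq \langle\Grid(M)\rangle$, while each inflating factor $\alpha_i$ also lies in $\C$. A routine induction on length then shows that every permutation in $\C$ is obtained by iterated inflations of simples from $\C$ and therefore belongs to $\langle\Grid(M)\rangle$.

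With these two pieces in hand, Theorem~\ref{thm-geom-simples-pwo-basis} applies to the forest matrix $M$ and yields that $\langle\Grid(M)\rangle$ is wqo. Since subclasses of wqo classes are wqo, this immediately gives that $\C$ itself is wqo, establishing the first assertion. For the second, recall from the introduction that the map $\pi\mapsto G_\pi$ is order-preserving, and that $24153$ and $31524$ are precisely the permutations whose associated graph is $P_5$; consequently the image of $\C$ under this map is exactly the class of permutation graphs omitting both $P_5$ and $K_\ell$, so well-quasi-ordering transfers from $\C$ to this graph class.

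The substantive work has already been done in Propositions~\ref{prop-simples-griddable} and~\ref{prop-gridding-corners}, which produced the forest gridding matrix $M$ for the simples of $\C$; the argument above is then a formal synthesis. The only nontrivial verification left is the forest property, which is an immediate consequence of the structural restriction on submatrices of $M$.
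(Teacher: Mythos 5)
Your proposal is correct and takes essentially the same route as the paper: invoke Proposition~\ref{prop-gridding-corners}, observe that a cycle in the cell graph would force a $\fnmatrix{rr}{1&1\\1&*}$ submatrix so that $M$ is a forest, deduce $\Av(24153, 31524, \ell\cdots 21)\subseteq\langle\Grid(M)\rangle$, and conclude via Theorem~\ref{thm-geom-simples-pwo-basis} together with the order-preserving map $\pi\mapsto G_\pi$. The two points you spell out explicitly (the topmost-leftmost corner of a hypothetical cycle yielding the forbidden pattern, and the induction on the substitution decomposition showing containment in the substitution closure) are precisely the steps the paper leaves implicit, so the proofs match in substance.
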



\section{Permutation Graphs Omitting \texorpdfstring{$P_6$, $P_7$ or $P_8$}{P\_6, P\_7 or P\_8}}\label{sec-p7}

In this section, we establish the following:

\begin{proposition}\label{prop-three-antichains}
The following three classes of graphs are not wqo:
\begin{enumerate}
\item[(1)] the $P_6$, $K_6$-free permutation graphs,
\item[(2)] the $P_7$, $K_5$-free permutation graphs, and
\item[(3)] the $P_8$, $K_4$-free permutation graphs.
\end{enumerate}
\end{proposition}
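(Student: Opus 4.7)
The plan is to exhibit, separately for each of the three cases $i\in\{1,2,3\}$ with $(k_i,\ell_i)\in\{(6,6),(7,5),(8,4)\}$, an explicit infinite family of permutations $\pi_1^{(i)},\pi_2^{(i)},\ldots$ such that the associated permutation graphs $G_{\pi_n^{(i)}}$ both avoid $P_{k_i}$ and $K_{\ell_i}$ as induced subgraphs and are pairwise incomparable under the induced subgraph order. As emphasised in the introduction, the antichain property must be verified at the graph level, not just at the permutation level, because the map $\pi\mapsto G_\pi$ is many-to-one; and since the three classes are nested in neither direction, genuinely separate constructions (or at least separate verifications) seem to be required for the three $(k_i,\ell_i)$.

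For the constructions themselves I would begin with the classical increasing oscillating antichain of permutations, whose longest decreasing subsequence has length $2$ and whose graphs are therefore triangle-free. By inflating a carefully chosen set of entries by small monotone blocks of prescribed shape, one can raise the longest decreasing subsequence to exactly $\ell_i-1$ --- which, via the definition of $G_\pi$, is equivalent to $K_{\ell_i}$-freeness of the resulting graph --- while tuning the inflation gadgets so as to avoid both length-$k_i$ permutations whose graph is $P_{k_i}$ (i.e., the two ``zigzag'' representations of $P_{k_i}$, generalising $24153$ and $31524$). Verifying these forbidden-subgraph conditions is a direct pattern-avoidance computation based on how a putative occurrence of a forbidden pattern could intersect the underlying oscillation and the inflation gadgets; this is routine and done case-by-case in each family.

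The main obstacle is verifying the antichain property at the graph level. Here I would identify a canonical graph-theoretic ``skeleton'' inside each $G_{\pi_n^{(i)}}$ which is intrinsic to the graph --- for instance, a unique longest induced path, or the induced subgraph spanned by the vertices of some distinguished degree --- whose length grows with $n$, together with a pair of ``end-decorations'' attached at its extremes that encode $n$ locally. Any induced-subgraph embedding of $G_{\pi_m^{(i)}}$ into $G_{\pi_n^{(i)}}$ for $m<n$ must align the two skeletons, and then a short case analysis on how the end-decorations of $G_{\pi_m^{(i)}}$ can be positioned relative to the image of its skeleton inside $G_{\pi_n^{(i)}}$ forces either a missing or an extraneous edge, contradicting the embedding. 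The delicate point is that the skeletons may admit nontrivial automorphisms or allow small ``shifts'' of the embedding along their length, so the end-decorations must be chosen asymmetrically enough that such slides and reflections can be ruled out; getting this asymmetry right simultaneously with the $P_{k_i}$-avoidance constraint is where the three cases genuinely differ and is the heart of the argument.
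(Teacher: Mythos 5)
Your construction fails at the first step, for a reason that no amount of gadget-tuning can repair. The permutation graph of an increasing oscillation of length $n$ is precisely the path $P_n$ --- indeed, the two increasing oscillations of length $k$ are exactly the two permutations whose graph is $P_k$ (for instance $24153$ and $31524$ for $P_5$, and $24163857$ and $31527486$ for $P_8$). Moreover, inflation never destroys patterns: since every block in an inflation $\pi[\alpha_1,\dots,\alpha_n]$ is nonempty, the inflated permutation contains $\pi$, and the map $\pi\mapsto G_\pi$ is order-preserving. Hence, whatever monotone blocks you attach to the members of the increasing oscillating antichain, each resulting graph of size at least $k_i$ still contains an induced $P_{k_i}$: the first $k_i$ entries of the underlying oscillation already form a copy of one of the two forbidden zigzag patterns. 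So all but finitely many members of each of your three families lie \emph{outside} the target class, and they cannot witness non-wqo of that class. Note also that ``raising the longest decreasing subsequence to exactly $\ell_i-1$'' buys nothing: the oscillation graphs are triangle-free, hence already $K_{\ell_i}$-free for every $\ell_i\ge 3$; the binding constraint in all three cases is path-avoidance, which is exactly what inflation cannot achieve.

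The paper circumvents this obstruction not by inflating oscillations but by \emph{splitting} oscillation-like structures across cells of a generalised grid class, which is what kills the long induced paths: for case (2) the antichain lies in $\Grid\fnmatrix{rr}{\bigoplus 21 & \bigoplus 21}$, which avoids $241635$ and therefore both permutations representing $P_7$; cases (1) and (3) use $\Grid\fnmatrix{rr}{\bigoplus 21&\Av(21)\\&\bigoplus 21}$ and $\Grid\fnmatrix{rr}{\Av(21)&\Av(21)\\\Av(21)&\Av(21)}$ respectively, with known antichains of Brignall and of Murphy and Vatter living inside them. Your second step --- graph-level incomparability via a ``canonical skeleton'' --- is also too vague to assess (your graphs are not shown to possess a unique longest induced path or vertices of a distinguished degree), whereas the paper settles this point cleanly with Gallai's theorem: each antichain element is an inflation of a simple permutation by two copies of $21$, so $\Perms(G_\pi)=\{\pi,\pi^{-1},\pi^\rc,(\pi^{-1})^\rc\}$, and each nontrivial symmetry is excluded by exhibiting a short pattern (such as $51423$ or $34251$) that the symmetry contains but the ambient grid class omits. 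If you want a correct elementary construction, the place to start is with these juxtaposed (``parallel'') antichains, not with inflations of a single oscillation.
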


In order to prove these three classes are not wqo, it suffices to show that each class contains an infinite antichain. This is done by showing that the related permutation classes contain ``generalised'' grid classes, for which infinite antichains are already known. In general, we cannot immediately guarantee that the permutation antichain translates to a graph antichain, but we will show that this is in fact the case for the three we construct here.

We must first introduce \emph{generalised} grid classes. Suppose that $\M$ is a $t\times u$ matrix of permutation classes (we use calligraphic font for matrices containing permutation classes).  An {\it $\M$-gridding\/} of the permutation $\pi$ of length $n$ in this context is a pair of sequences $1=c_1\le\cdots\le c_{t+1}=n+1$ (the column divisions) and $1=r_1\le\cdots\le r_{u+1}=n+1$ (the row divisions) such that for all $1\le k\le t$ and $1\le\ell\le u$, the entries of $\pi$ from indices $c_k$ up to but not including $c_{k+1}$, which have values from $r_{\ell}$ up to but not including $r_{\ell+1}$ are either empty or order isomorphic to an element of $\M_{k,\ell}$.  The {\it grid class of $\M$\/}, written $\Grid(\M)$, consists of all permutations which possess an $\M$-gridding. The notion of monotone griddability can be analogously defined, but we do not require this.

Here, our generalised grid classes are formed from gridding matrices which contain the monotone class $\Av(21)$, and a non-monotone permutation class denoted $\bigoplus 21$. This is formed by taking all finite subpermutations of the infinite permutation $21436587\cdots$. In terms of minimal forbidden elements, we have $\bigoplus 21 = \Av(321,231,312)$.

Our proof of Proposition~\ref{prop-three-antichains} requires some further theory to ensure we can convert the permutation antichains we construct into graph antichains. The primary issue is that a permutation graph $G$ can have several different corresponding permutations. With this in mind, let \[\Perms(G) = \{\text{permutations } \pi : G_\pi \cong G\}\] denote the set of permutations each of which corresponds to the permutation graph $G$.

Denote by $\pi^{-1}$ the (group-theoretic) \emph{inverse} of $\pi$, by $\pi^\rc$ the \emph{reverse-complement} (formed by reversing the order of the entries of $\pi$, then replacing each entry $i$ by $|\pi|-i+1$), and by $(\pi^{-1})^{\rc}$ the \emph{inverse-reverse-complement}, formed by composing the two previous operations (in either order, as the two operations commute). It is then easy to see that if $\pi\in\Perms(G)$, then $\Perms(G)$ must also contain all of $\pi^{-1}$, $\pi^\rc$ and $(\pi^{-1})^{\rc}$. However, it is possible that $\Perms(G)$ may contain other permutations, and this depends on the graph-theoretic analogue of the substitution decomposition, which is called the \emph{modular decomposition}.

We also need to introduce the graph analogues of intervals and simplicity, which have different names in that context. A \emph{module} $M$ in a graph $G$ is a set of vertices such that for every $u,v\in M$ and $w\in V(G)\setminus M$, $u$ is adjacent to $w$ if and only if $v$ is adjacent to $w$. A graph $G$ is said to be \emph{prime} if it has no nontrivial modules, that is, any module $M$ of $G$ satisfies $|M| = 0, 1,$ or $|V(G)|$.

The following result, arising as a consequence of Gallai's work on transitive orientations, gives us some control over $\Perms(G)$:

\begin{proposition}[Gallai~\cite{gallai:transitiv-orien:}]\label{prop-gallai}
If $G$ is a prime permutation graph, then, up to the symmetries inverse, reverse-complement, and inverse-reverse-complement, $\Perms(G)$ contains a unique permutation.
\end{proposition}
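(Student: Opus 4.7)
The plan is to reduce the statement to Gallai's classical theorem that a prime comparability graph has, up to reversal, a unique transitive orientation. The required ingredients are the standard encoding of a permutation as a pair of linear orders on its vertex set, and an analysis of how the symmetries inverse and reverse-complement act on this pair.

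First, I would set up the correspondence: any permutation $\sigma$ of length $n$ determines two linear orders on $V(G_\sigma)=\{1,\dots,n\}$, namely the position order $L_1$, and the value order $L_2$ defined by $i<_{L_2}j$ iff $\sigma(i)<\sigma(j)$. Two vertices are adjacent in $G_\sigma$ exactly when $L_1$ and $L_2$ disagree on them. Each such linear order on $V(G)$ determines a transitive orientation of the complete graph, whose restrictions yield transitive orientations $P$ of $G$ and $Q$ of $\overline G$. By construction, $L_1$ induces $P$ on $E(G)$ and $Q$ on $E(\overline G)$, whereas $L_2$ induces $P^{\mathrm{op}}$ on $E(G)$ (opposite on edges, since $L_1$ and $L_2$ disagree there) and $Q$ on $E(\overline G)$ (agreement on non-edges).

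Next, I would invoke Gallai's theorem. Modules of a graph are precisely modules of its complement, so $\overline G$ is prime whenever $G$ is, making both prime comparability graphs. Thus $G$ has only the two transitive orientations $P, P^{\mathrm{op}}$, and $\overline G$ has only $Q, Q^{\mathrm{op}}$. Therefore any $\pi\in\Perms(G)$ gives rise to one of at most four ordered pairs of orientations: $(P,Q)$, $(P^{\mathrm{op}},Q)$, $(P,Q^{\mathrm{op}})$, or $(P^{\mathrm{op}},Q^{\mathrm{op}})$.

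Finally, I would verify that the four permutations $\sigma,\sigma^{-1},\sigma^\rc,(\sigma^{-1})^\rc$ realize these four pairs, one each. A direct tracking shows that the inverse operation swaps $L_1$ with $L_2$ (hence replaces $P$ by $P^{\mathrm{op}}$ while fixing $Q$), while the reverse-complement simultaneously reverses both $L_1$ and $L_2$ (hence reverses both $P$ and $Q$). Since the pair $(P,Q)$ determines the permutation---the combined tournament $P\cup Q$ is the linear order $L_1$, and $L_2$ is then recovered by reversing it on $E(G)$---distinct pairs correspond to distinct permutations in $\Perms(G)$, forcing $\Perms(G)$ to consist of precisely these four permutations. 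The main obstacle is the careful bookkeeping of how the two symmetries act on the pair $(L_1,L_2)$ and confirming that the combined orientation indeed assembles into a linear order in each case; once Gallai's theorem is in hand, the remainder of the argument is purely formal.
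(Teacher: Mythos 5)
The paper gives no proof of this proposition at all---it is cited directly from Gallai's work---so there is no line-by-line comparison to make; your derivation is precisely the ``consequence of Gallai's work on transitive orientations'' that the paper alludes to, namely applying Gallai's theorem (a prime comparability graph has exactly two transitive orientations, each the reverse of the other) to both $G$ and $\overline{G}$, and it is correct. One small inaccuracy: your closing claim that distinct pairs of orientations yield distinct permutations can fail in degenerate cases (e.g.\ if $\sigma=\sigma^{-1}$, as for the simple involution $35142$, the pairs $(P,Q)$ and $(P^{\mathrm{op}},Q)$ read off the same permutation), so $\Perms(G)$ need not have exactly four elements; but this is harmless, since the proposition only requires that every $\pi\in\Perms(G)$ is the read-off of one of the at most four pairs, each of which is realised by one of $\sigma$, $\sigma^{-1}$, $\sigma^{\rc}$, $(\sigma^{-1})^{\rc}$. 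You should also note explicitly that since $\Perms(G)$ is defined via isomorphism $G_\pi\cong G$, the two linear orders of $\pi$ must be transported to $V(G)$ along a choice of isomorphism before Gallai's theorem applies; the read-off permutation is independent of that choice, so the argument goes through.
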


We now extend Proposition~\ref{prop-gallai} to suit our purposes. Consider a simple permutation $\sigma$ of length $m\geq 4$, and form the permutation $\pi$ by inflating two\footnote{It is, in fact, possible to inflate more than two entries and establish the same result, but we do not require this here.} of the entries of $\sigma$ each by the permutation $21$. Note that $G_{21} = K_2$, and $\Perms(K_2)=\{21\}$. In the correspondence between graphs and permutations, modules map to intervals and vice versa, so prime graphs correspond to simple permutations, and there is an analogous result to Proposition~\ref{simple-decomp-1} for (permutation) graphs.

Thus, for any permutation $\rho\in\Perms(G_\pi)$, it follows that $\rho$ must be constructed by inflating two entries of some simple permutation $\tau$ by the permutation $21$. Moreover, $\tau$ must be one of $\sigma$, $\sigma^{-1}$, $\sigma^\rc$ or $(\sigma^{-1})^{\rc}$, and the entries of $\tau$ which are inflated are determined by which entries of $\sigma$ were inflated, and which of the four symmetries of $\sigma$ is equal to $\tau$. In other words, we still have $\Perms(G_\pi)=\{\pi,\pi^{-1},\pi^\rc,(\pi^{-1})^{\rc}\}$.

We require one further easy observation:

\begin{lemma}\label{lem-graph-perm-order}
If $G$ and $H$ are permutation graphs such that $H\leq G$, then for any $\pi\in\Perms(G)$ there exists $\sigma\in\Perms(H)$ such that $\sigma\leq\pi$.
\end{lemma}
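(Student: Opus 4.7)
The plan is to unpack the definitions: given $\pi\in\Perms(G)$ of length $n$, we have a concrete isomorphism $G\cong G_\pi$, so the induced copy of $H$ inside $G$ transports to an induced copy of $H$ inside $G_\pi$. That copy is supported on some vertex subset $I=\{i_1<i_2<\cdots<i_k\}\subseteq\{1,\ldots,n\}$ with $G_\pi[I]\cong H$. The natural candidate for $\sigma$ is then the pattern of $\pi$ at these indices, namely the unique permutation of length $k$ order-isomorphic to the sequence $\pi(i_1)\pi(i_2)\cdots\pi(i_k)$. By the definition of containment recalled in the introduction, this immediately gives $\sigma\le\pi$.

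It then remains to check that $\sigma\in\Perms(H)$, i.e.\ that $G_\sigma\cong H$. For this, observe that the map $j\mapsto i_j$ is an isomorphism from $G_\sigma$ to $G_\pi[I]$: two positions $j<l$ in $\{1,\ldots,k\}$ are adjacent in $G_\sigma$ exactly when $\sigma(j)\ge\sigma(l)$, and since $\sigma$ is the standardisation of $\pi(i_1)\cdots\pi(i_k)$ this occurs if and only if $\pi(i_j)\ge\pi(i_l)$, which is precisely the condition for $i_j$ and $i_l$ to be adjacent in $G_\pi$. Composing with the induced isomorphism $G_\pi[I]\cong H$ yields $G_\sigma\cong H$, so $\sigma\in\Perms(H)$ and $\sigma\le\pi$, completing the proof. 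There is no real obstacle here; the statement is essentially the order-preserving property of $\pi\mapsto G_\pi$ noted in the introduction, applied in the reverse direction via a choice of witness subset, and the main thing to be careful about is simply to read the two notions of ``containment'' (induced subgraph versus pattern) through the same index set $I$.
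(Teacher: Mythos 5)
Your proposal is correct and follows exactly the paper's argument: take the entries of $\pi$ corresponding to an embedding of $H$ in $G_\pi$, let $\sigma$ be their pattern, and observe $\sigma\le\pi$ and $G_\sigma\cong H$. The only difference is that you spell out the adjacency-preservation check that the paper dismisses with ``clearly,'' which is a fine elaboration rather than a different route.
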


\begin{proof}
Given any $\pi\in\Perms(G)$, let $\sigma$ denote the subpermutation of $\pi$ formed from the entries of $\pi$ which correspond to the vertices of an embedding of $H$ as an induced subgraph of $G$. Clearly $G_\sigma\cong H$, so $\sigma\in\Perms(H)$ and $\sigma\leq \pi$, as required.
\end{proof}

We are now in a position to prove Proposition~\ref{prop-three-antichains}. Since the techniques are broadly similar for all three cases, we will give the details for case (2), and only outline the key steps for the other two cases.

\begin{proof}[Proof of Proposition~\ref{prop-three-antichains}\,(2)]
First, the graph $P_7$ corresponds to two permutations, namely $3152746$ and $2416375$ respectively. Thus the class of $P_7$, $K_5$-free permutation graphs corresponds to the permutation class $\Av(3152746,2416375,54321)$. This permutation class contains the grid class \[\Grid\fnmatrix{rr}{ \bigoplus 21 & \bigoplus 21},\] because this grid class avoids the permutations $241635$ (contained in both $3152746$ and $2416375$), and $54321$.

We now follow the recipe given by Brignall~\cite{brignall:grid-classes-an:} to construct an infinite antichain which lies in $\Grid\fnmatrix{rr}{ \bigoplus 21 & \bigoplus 21}$. Call the resulting antichain $A$, the first three elements and general term of which are illustrated in Figure~\ref{fig-parallel-antichain}. This antichain is related to the ``parallel'' antichain in Murphy's thesis~\cite{murphy:restricted-perm:}.
\begin{figure}
\centering
\begin{tabular}{ccccccc}
\psset{xunit=0.4pt, yunit=0.4pt, runit=0.03in}
\begin{pspicture}(0,0)(130,130)
\psline[linecolor=gray](15,25)(80,10)(70,50)(40,40)(30,70)(100,60)(90,90)(60,80)(50,120)(115,105)
\pscircle*[linecolor=black](10,30){1.0}
\pscircle*[linecolor=black](20,20){1.0}
\pscircle*[linecolor=black](30,70){1.0}
\pscircle*[linecolor=black](40,40){1.0}
\pscircle*[linecolor=black](50,120){1.0}
\pscircle*[linecolor=black](60,80){1.0}
\psline[linecolor=gray](65,0)(65,130)
\pscircle*[linecolor=black](70,50){1.0}
\pscircle*[linecolor=black](80,10){1.0}
\pscircle*[linecolor=black](90,90){1.0}
\pscircle*[linecolor=black](100,60){1.0}
\pscircle*[linecolor=black](110,110){1.0}
\pscircle*[linecolor=black](120,100){1.0}
\psccurve(8,18)(4,36)(22,32)(26,14)
\rput(100,80){\psccurve(8,18)(4,36)(22,32)(26,14)}
\end{pspicture}
&\rule{5pt}{0pt}&
\psset{xunit=0.4pt, yunit=0.4pt, runit=0.03in}
\begin{pspicture}(0,0)(170,170)
\psline[linecolor=gray](15,25)(100,10)(90,50)(40,40)(30,70)(120,60)(110,90)(60,80)(50,110)(140,100)(130,130)(80,120)(70,160)(155,145)
\pscircle*[linecolor=black](10,30){1}
\pscircle*[linecolor=black](20,20){1}
\pscircle*[linecolor=black](30,70){1}
\pscircle*[linecolor=black](40,40){1}
\pscircle*[linecolor=black](50,110){1}
\pscircle*[linecolor=black](60,80){1}
\pscircle*[linecolor=black](70,160){1}
\pscircle*[linecolor=black](80,120){1}
\psline[linecolor=gray](85,0)(85,170)
\pscircle*[linecolor=black](90,50){1}
\pscircle*[linecolor=black](100,10){1}
\pscircle*[linecolor=black](110,90){1}
\pscircle*[linecolor=black](120,60){1}
\pscircle*[linecolor=black](130,130){1}
\pscircle*[linecolor=black](140,100){1}
\pscircle*[linecolor=black](150,150){1}
\pscircle*[linecolor=black](160,140){1}
\psccurve(8,18)(4,36)(22,32)(26,14)
\rput(140,120){\psccurve(8,18)(4,36)(22,32)(26,14)}
\end{pspicture}
&\rule{5pt}{0pt}&
\psset{xunit=0.4pt, yunit=0.4pt, runit=0.03in}
\begin{pspicture}(0,0)(210,210)
\psline[linecolor=gray](15,25)(120,10)(110,50)(40,40)(30,70)(140,60)(130,90)(60,80)(50,110)(160,100)(150,130)(80,120)(70,150)(180,140)(170,170)(100,160)(90,200)(195,185)
\pscircle*[linecolor=black](10,30){1}
\pscircle*[linecolor=black](20,20){1}
\pscircle*[linecolor=black](30,70){1}
\pscircle*[linecolor=black](40,40){1}
\pscircle*[linecolor=black](50,110){1}
\pscircle*[linecolor=black](60,80){1}
\pscircle*[linecolor=black](70,150){1}
\pscircle*[linecolor=black](80,120){1}
\pscircle*[linecolor=black](90,200){1}
\pscircle*[linecolor=black](100,160){1}
\psline[linecolor=gray](105,0)(105,210)
\pscircle*[linecolor=black](110,50){1}
\pscircle*[linecolor=black](120,10){1}
\pscircle*[linecolor=black](130,90){1}
\pscircle*[linecolor=black](140,60){1}
\pscircle*[linecolor=black](150,130){1}
\pscircle*[linecolor=black](160,100){1}
\pscircle*[linecolor=black](170,170){1}
\pscircle*[linecolor=black](180,140){1}
\pscircle*[linecolor=black](190,190){1.0}
\pscircle*[linecolor=black](200,180){1.0}
\psccurve(8,18)(4,36)(22,32)(26,14)
\rput(180,160){\psccurve(8,18)(4,36)(22,32)(26,14)}
\end{pspicture}
&\rule{5pt}{0pt}&
\psset{xunit=0.4pt, yunit=0.4pt, runit=0.03in}
\begin{pspicture}(0,0)(210,210)
\psline[linecolor=gray](15,25)(120,10)(110,50)(40,40)(30,70)(140,60)(130,90)
\psline[linecolor=gray,linestyle=dashed](130,90)(95,85)
\psline[linecolor=gray,linestyle=dashed](115,125)(80,120)
\psline[linecolor=gray](80,120)(70,150)(180,140)(170,170)(100,160)(90,200)(195,185)
\pscircle*[linecolor=black](10,30){1}
\pscircle*[linecolor=black](20,20){1}
\pscircle*[linecolor=black](30,70){1}
\pscircle*[linecolor=black](40,40){1}
\pscircle*[linecolor=black](70,150){1}
\pscircle*[linecolor=black](80,120){1}
\pscircle*[linecolor=black](90,200){1}
\pscircle*[linecolor=black](100,160){1}
\psline[linecolor=gray](105,0)(105,210)
\pscircle*[linecolor=black](110,50){1}
\pscircle*[linecolor=black](120,10){1}
\pscircle*[linecolor=black](130,90){1}
\pscircle*[linecolor=black](140,60){1}
\pscircle*[linecolor=black](170,170){1}
\pscircle*[linecolor=black](180,140){1}
\pscircle*[linecolor=black](190,190){1.0}
\pscircle*[linecolor=black](200,180){1.0}
\psccurve(8,18)(4,36)(22,32)(26,14)
\rput(180,160){\psccurve(8,18)(4,36)(22,32)(26,14)}
\end{pspicture}
\end{tabular}
\caption[]{The first three elements and the general structure of the infinite antichain $A$ in $\Grid\fnmatrix{rr}{ \bigoplus 21 & \bigoplus 21}$. The grey lines indicate the sequence used to place the entries.}
\label{fig-parallel-antichain}
\end{figure}
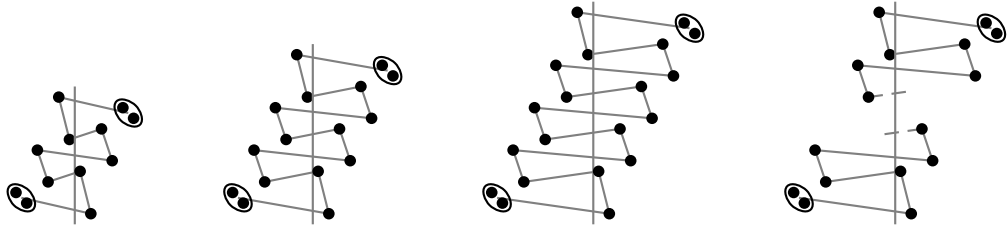

Now set $G_A = \{ G_\pi : \pi \in A\}$, and note that $G_A$ is contained in the class of permutation graphs omitting $P_7$ and $K_5$ by Lemma~\ref{lem-graph-perm-order}. If $G_A$ is an antichain of graphs, then we are done, so suppose for a contradiction that there exists $G,H\in G_A$ with $H\leq G$. Take the permutation $\pi\in A$ for which $G_\pi= G$. Applying Lemma~\ref{lem-graph-perm-order}, there exists $\sigma \in \Perms(H)$ such that $\sigma\leq \pi$. We cannot have $\sigma\in A$ since $A$ is an antichain, so $\sigma$ must be some other permutation with the same graph. Choose $\tau\in A\cap\Perms(H)$.

It is easy to check that the only non-trivial intervals in any permutation in $A$ are the two pairs of points in Figure~\ref{fig-parallel-antichain} which are circled. Thus, $\tau$ is formed by inflating two entries of a simple permutation each by a copy of $21$. By the comments after Proposition~\ref{prop-gallai}, we conclude that $\sigma$ is equal to one of $\tau^{-1}$, $\tau^\rc$ or $(\tau^{-1})^{\rc}$.

Note that every permutation in $A$ is closed under reverse-complement, so $\tau=\tau^\rc$ and thus $\sigma\neq\tau^\rc$. If $\sigma=\tau^{-1}$, by inspection every element of $A$ contains a copy of $24531$, which means that $\sigma$ contains $(24531)^{-1}=51423$. Since $\sigma\leq \pi$, it follows that $\pi$ must also contain a copy of $51423$. However this is impossible, because $51423$ is not in $\Grid\fnmatrix{rr}{ \bigoplus 21 & \bigoplus 21}$.

Thus, we must have $\sigma= (\tau^{-1})^{\rc}$, in which case $\sigma$ must contain a copy of $\left((24531)^\rc\right)^{-1}=34251$. However, this permutation is also not in $\Grid\fnmatrix{rr}{ \bigoplus 21 & \bigoplus 21}$ so cannot be contained in $\pi$. Thus $\sigma\not\in\pi$, and from this final contradiction we conclude that $H\not\leq G$, so $G_A$ is an infinite antichain of permutation graphs, as required.
\end{proof}
\begin{figure}
\centering
\begin{tabular}{ccccc}
\psset{xunit=0.4pt, yunit=0.4pt, runit=0.03in}
\begin{pspicture}(0,0)(120,120)
\psline[linecolor=gray](65,15)(50,60)(20,50)(10,80)(90,70)(80,40)(110,30)(100,110)(35,95)
\psline[linecolor=gray](45,0)(45,120)
\psline[linecolor=gray](0,45)(120,45)
\pscircle*[linecolor=black](10,80){1.0}
\pscircle*[linecolor=black](20,50){1.0}
\pscircle*[linecolor=black](30,100){1.0}
\pscircle*[linecolor=black](40,90){1.0}
\pscircle*[linecolor=black](50,60){1.0}
\pscircle*[linecolor=black](60,20){1.0}
\pscircle*[linecolor=black](70,10){1.0}
\pscircle*[linecolor=black](80,40){1.0}
\pscircle*[linecolor=black](90,70){1.0}
\pscircle*[linecolor=black](100,110){1.0}
\pscircle*[linecolor=black](110,30){1.0}
\rput(50,0){\psccurve(8,8)(4,26)(22,22)(26,4)}
\rput(20,80){\psccurve(8,8)(4,26)(22,22)(26,4)}
\end{pspicture}
&\rule{10pt}{0pt}&
\psset{xunit=0.4pt, yunit=0.4pt, runit=0.03in}
\begin{pspicture}(0,0)(180,180)
\psline[linecolor=gray](85,15)(70,80)(20,70)(10,100)(110,90)(100,40)(130,30)(120,120)(40,110)(30,140)(150,130)(140,60)(170,50)(160,170)(55,155)
\psline[linecolor=gray](65,0)(65,180)
\psline[linecolor=gray](0,65)(180,65)
\pscircle*[linecolor=black](10,100){1.0}
\pscircle*[linecolor=black](20,70){1.0}
\pscircle*[linecolor=black](30,140){1.0}
\pscircle*[linecolor=black](40,110){1.0}
\pscircle*[linecolor=black](50,160){1.0}
\pscircle*[linecolor=black](60,150){1.0}
\pscircle*[linecolor=black](70,80){1.0}
\pscircle*[linecolor=black](80,20){1.0}
\pscircle*[linecolor=black](90,10){1.0}
\pscircle*[linecolor=black](100,40){1.0}
\pscircle*[linecolor=black](110,90){1.0}
\pscircle*[linecolor=black](120,120){1.0}
\pscircle*[linecolor=black](130,30){1.0}
\pscircle*[linecolor=black](140,60){1.0}
\pscircle*[linecolor=black](150,130){1.0}
\pscircle*[linecolor=black](160,170){1.0}
\pscircle*[linecolor=black](170,50){1.0}
\rput(70,0){\psccurve(8,8)(4,26)(22,22)(26,4)}
\rput(40,140){\psccurve(8,8)(4,26)(22,22)(26,4)}
\end{pspicture}
&\rule{10pt}{0pt}&
\psset{xunit=0.4pt, yunit=0.4pt, runit=0.03in}
\begin{pspicture}(0,0)(240,240)
\psline[linecolor=gray](105,15)(90,100)(20,90)(10,120)(130,110)(120,40)(150,30)(140,140)(40,130)(30,160)(170,150)(160,60)(190,50)(180,180)(60,170)(50,200)(210,190)(200,80)(230,70)(220,230)(75,215)
\psline[linecolor=gray](85,0)(85,240)
\psline[linecolor=gray](0,85)(240,85)
\pscircle*[linecolor=black](10,120){1.0}
\pscircle*[linecolor=black](20,90){1.0}
\pscircle*[linecolor=black](30,160){1.0}
\pscircle*[linecolor=black](40,130){1.0}
\pscircle*[linecolor=black](50,200){1.0}
\pscircle*[linecolor=black](60,170){1.0}
\pscircle*[linecolor=black](70,220){1.0}
\pscircle*[linecolor=black](80,210){1.0}
\pscircle*[linecolor=black](90,100){1.0}
\pscircle*[linecolor=black](100,20){1.0}
\pscircle*[linecolor=black](110,10){1.0}
\pscircle*[linecolor=black](120,40){1.0}
\pscircle*[linecolor=black](130,110){1.0}
\pscircle*[linecolor=black](140,140){1.0}
\pscircle*[linecolor=black](150,30){1.0}
\pscircle*[linecolor=black](160,60){1.0}
\pscircle*[linecolor=black](170,150){1.0}
\pscircle*[linecolor=black](180,180){1.0}
\pscircle*[linecolor=black](190,50){1.0}
\pscircle*[linecolor=black](200,80){1.0}
\pscircle*[linecolor=black](210,190){1.0}
\pscircle*[linecolor=black](220,230){1.0}
\pscircle*[linecolor=black](230,70){1.0}
\rput(90,0){\psccurve(8,8)(4,26)(22,22)(26,4)}
\rput(60,200){\psccurve(8,8)(4,26)(22,22)(26,4)}
\end{pspicture}
\end{tabular}
\caption[]{The first three elements of an infinite antichain in $\Grid\fnmatrix{rr}{\bigoplus 21&\Av(21)\\&\bigoplus 21}$.}
\label{fig-hook-antichain}
\end{figure}
\begin{proof}[Sketch proof of Proposition~\ref{prop-three-antichains}\,(1) and (3)]
For (1), the class of $P_6$, $K_6$-free permutation graphs corresponds to $\Av(241635,315264,654321)$. This class contains \[\Grid\fnmatrix{rr}{\bigoplus 21&\Av(21)\\&\bigoplus 21}\] because this grid class does not contain $23154$ (contained in $241635$), $31254$ (contained in $315264$) and $654321$.

By~\cite{brignall:grid-classes-an:}, this grid class contains an infinite antichain whose first three elements are illustrated in Figure~\ref{fig-hook-antichain}. The permutations in this antichain have exactly two proper intervals, indicated by the circled pairs of points in each case, and this means that for any permutation $\pi$ in this antichain, $\Perms(G_\pi) = \{\pi, \pi^{-1}, \pi^\rc, (\pi^{-1})^{\rc}\}$.

Following the proof of Proposition~\ref{prop-three-antichains}\,(2), it suffices to show that for any permutations $\sigma$ and $\pi$ in the antichain, none of $\sigma$, $\sigma^{-1}$, $\sigma^\rc$, or $(\sigma^{-1})^{\rc}$ is contained in $\pi$. This is done by identifying permutations which are not contained in any antichain element $\pi$, but which are contained in one of the symmetries $\sigma^{-1}$, $\sigma^\rc$, or $(\sigma^{-1})^{\rc}$. We omit the details.

For (3), The $P_8$, $K_4$-free permutation graphs correspond to the permutation class $\Av(4321,24163857,31527486)$, and this contains the (monotone) grid class \[\Grid\fnmatrix{rr}{\Av(21)&\Av(21)\\\Av(21)&\Av(21)}.\] In this case, we appeal to Murphy and Vatter~\cite{murphy:profile-classes:} for an infinite antichain, and the same technique used to prove cases~(1) and~(2) can be applied.
\end{proof}

\section{Enumeration}\label{sec-enum}

One significant difference between studies of the induced subgraph order and the subpermutation order is the research interests of the different camps of investigators. In the latter area, a great proportion of the work is enumerative in nature. Therefore, having established the structure of permutation graphs avoiding $P_5$ and cliques, we briefly study the enumeration of the corresponding permutation classes in this section. Our goal is to show that these classes are \emph{strongly rational}, i.e., that they and every one of their subclasses have rational generating functions. Here we refer to
$$
\sum_{\pi\in\C} x^{|\pi|}
$$
as the generating function of the class $\C$, where $|\pi|$ denotes the length of the permutation $\pi$. Note that by a simple counting argument (made explicit in Albert, Atkinson, and Vatter~\cite{albert:subclasses-of-t:}), strongly rational permutation classes must be wqo.

Proposition~\ref{simple-decomp-1} allows us to associate with any permutation $\pi$ a unique \emph{substitution decomposition tree}. This tree is recursively defined by decomposing each node of the tree as
\begin{itemize}
\item $\sigma[\alpha_1,\dots,\alpha_m]$ where $\sigma$ is a nonmonotone simple permutation,
\item $\alpha_1\oplus\cdots\oplus\alpha_m$ where each $\alpha_i$ is sum indecomposable, or
\item $\alpha_1\ominus\cdots\ominus\alpha_m$ where each $\alpha_i$ is skew indecomposable.
\end{itemize}
See Figure~\ref{fig-tree-375896214} for an example. In particular, note that by our indecomposability assumptions, sum nodes (resp., skew sum nodes) cannot occur twice in a row when reading up a branch of the tree. The {\it substitution depth\/} of $\pi$ is then the height of its substitution decomposition tree, so for example, the substitution depth of the permutation from Figure~\ref{fig-tree-375896214} is $3$, while the substitution depth of any simple (or monotone) permutation is $1$. As we show in our next result, substitution depth is bounded for the classes we are interested in. This result is a special case of Vatter~\cite[Proposition 4.2]{vatter:small-permutati:}, but we include a short proof for completeness.

\begin{figure}
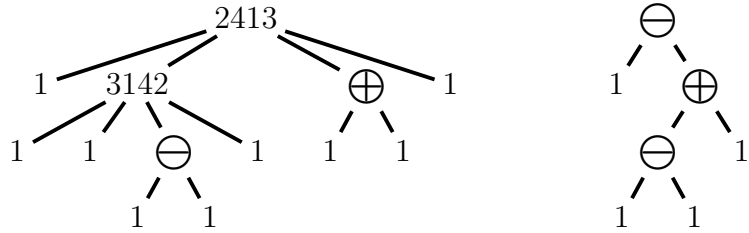

$$
\begin{array}{ccc}
\pstree[nodesep=3pt,levelsep=25pt,linewidth=0.02in]{\TR{2413} }{
\TR{1}
\pstree{ \TR{3142} }{
\TR{1}
\TR{1}
\pstree{ \TR{\bigominus} }{
\TR{1}
\TR{1}
}
\TR{1}
}
\pstree{ \TR{\bigoplus} }{
\TR{1}
\TR{1}
}
\TR{1}
}
&\rule{0.5in}{0pt}&
\pstree[nodesep=3pt,levelsep=25pt,linewidth=0.02in]{\TR{\bigominus} }{
\TR{1}
\pstree{ \TR{\bigoplus} }{
\pstree{ \TR{\bigominus} }{
\TR{1}
\TR{1}
}
\TR{1}
}
}
\end{array}
$$
\caption{The substitution decomposition tree of $375896214$ (of height $3$), and a pruned version of the same height which alternates between the labels $\oplus$ and $\ominus$.}\label{fig-tree-375896214}
\end{figure}

\begin{proposition}
\label{prop-bdd-subst-depth}
The substitution depth of every permutation in $\Av(\ell\dots 21)$ is at most $2\ell-3$ for $\ell\ge 2$.
\end{proposition}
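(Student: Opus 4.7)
The plan is to prove the stronger inequality $h(\pi) \le 2d(\pi) - 1$, where $h(\pi)$ denotes the substitution depth and $d(\pi)$ the length of a longest decreasing subsequence in $\pi$. Since every permutation in $\Av(\ell\cdots 21)$ satisfies $d(\pi) \le \ell - 1$, the stated bound $h(\pi) \le 2\ell - 3$ follows at once. The idea is to construct a long decreasing subsequence by walking up a deepest root-to-leaf path of the substitution decomposition tree, picking up one additional entry every time we pass a node that is not a $\oplus$-node.

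More concretely, I would fix a root-to-leaf path $v_0, v_1, \ldots, v_h$ of maximum length, write $\pi_i$ for the subpermutation at $v_i$, and build by reverse induction a decreasing subsequence $D_i$ in $\pi_i$: start with $D_h$ equal to the single entry at the leaf $v_h$, and when passing from $D_{i+1}$ to $D_i$, set $D_i = D_{i+1}$ if $v_i$ is a $\oplus$-node, and otherwise extend $D_{i+1}$ by one entry. If $v_i$ is a $\ominus$-node with children $\gamma_1 \ominus \cdots \ominus \gamma_s$ and $\pi_{i+1} = \gamma_j$, the extension is immediate: any entry of a sibling $\gamma_{j'}$ with $j' < j$ is above-left of $\gamma_j$ (and symmetrically for $j' > j$), so can be prepended to or appended to $D_{i+1}$. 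If $v_i$ is a simple (nonmonotone) node $\sigma[\gamma_1,\ldots,\gamma_s]$ with $|\sigma| \ge 4$ and $\pi_{i+1} = \gamma_j$, the same strategy works provided one can find $j' \ne j$ such that $\sigma(j)$ and $\sigma(j')$ form a $21$ pattern in $\sigma$.

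The latter claim---that every position of a simple nonmonotone permutation is involved in a $21$ pattern---is the only point requiring a little argument. I would prove it by contradiction: if no such $j'$ existed, then $\sigma(j') < \sigma(j)$ for all $j' < j$ and $\sigma(j') > \sigma(j)$ for all $j' > j$, which by a quick pigeonhole count forces $\sigma(j) = j$ and makes $\sigma(\{1,\ldots,j\}) = \{1,\ldots,j\}$ an interval. Since $|\sigma| \ge 4$, at least one of the intervals $\{1,\ldots,j\}$ or $\{j+1,\ldots,s\}$ is then non-trivial, contradicting the simplicity of $\sigma$. I expect this will be the main technical obstacle, but it is short.

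To conclude, the substitution decomposition forbids a $\oplus$-node from being the parent of another $\oplus$-node (its children must be sum-indecomposable), so consecutive nodes of $v_0, \ldots, v_{h-1}$ cannot both be of type $\oplus$. Therefore at most $\lceil h/2 \rceil$ of these internal nodes are $\oplus$-nodes, which means $|D_0| \ge 1 + \lfloor h/2 \rfloor$. This gives $d(\pi) \ge \lfloor h/2 \rfloor + 1$, equivalently $h \le 2d(\pi) - 1$, completing the proof.
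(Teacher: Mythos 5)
Your proof is correct, and it takes a genuinely different route from the paper's. The paper argues by induction on $\ell$: supposing the bound holds for $\ell$ but fails for $\ell+1$, it takes $\pi\in\Av((\ell+1)\cdots 21)$ of depth $2\ell$, peels off a possible sum decomposition to get a sum indecomposable permutation of depth $2\ell-1$, writes that as $\sigma[\alpha_1,\dots,\alpha_m]$ with $\sigma$ simple or decreasing, notes that some $\alpha_i$ has depth $2\ell-2$ and hence contains $\ell\cdots 21$ by induction, and then uses the fact that $\sigma(i)$ lies in an inversion of $\sigma$ to produce $(\ell+1)\cdots 21$ inside $\pi$. Your argument is, in effect, the unrolled version of this: instead of inducting on $\ell$, you walk a deepest root-to-leaf path and explicitly build a decreasing subsequence, gaining one entry at every non-$\oplus$ node. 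Both proofs run on the same two combinatorial engines --- $\oplus$-nodes cannot appear twice in a row, and every entry of a sum indecomposable (in particular simple nonmonotone) permutation participates in an inversion --- but the paper merely asserts the second fact in one clause, whereas you prove it. What your packaging buys is a sharper and more informative statement, $h(\pi)\le 2d(\pi)-1$ where $d(\pi)$ is the longest decreasing subsequence, from which the proposition follows by substituting $d(\pi)\le\ell-1$; the paper's packaging buys brevity, since the induction hypothesis replaces your explicit bookkeeping along the path.

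One small repair is needed in your inversion claim. When the child index is $j=s$, both intervals you name, $\{1,\dots,j\}$ and $\{j+1,\dots,s\}$, are trivial (the whole permutation and the empty set), so neither witnesses non-simplicity. But in that case your pigeonhole gives $\sigma(s)=s$, so the positions $\{1,\dots,s-1\}$ carry exactly the values $\{1,\dots,s-1\}$ and form a nontrivial interval of size $s-1\ge 3$; the symmetric case $j=1$ is already covered by $\{2,\dots,s\}$. With that one-line fix the argument is complete.
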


\begin{proof}
We prove the result using induction on $\ell$. Only increasing permutations avoid $21$ and they have substitution depth $1$, so the result holds for $\ell=2$. Suppose to the contrary that the result holds for $\ell\ge 2$ but that there is a permutation $\pi\in\Av((\ell+1)\cdots 21)$ of substitution depth $2\ell$ (we may always assume that the depth is precisely this value, because of downward closure). If $\pi$ is a sum decomposable permutation, express it as $\pi=\pi_1\oplus\cdots\oplus\pi_m$ and note that at least one $\pi_i$ must have substitution depth $2\ell-1$.

Thus we may now assume that there is a sum indecomposable permutation $\pi\in\Av((\ell+1)\cdots 21)$ of substitution depth $2\ell-1$. Write $\pi=\sigma[\alpha_1,\dots,\alpha_m]$ where $\sigma$ is either a simple permutation or decreasing of length at least $2$ (to cover the case where $\pi$ is skew decomposable). At least one of the $\alpha_i$ must have substitution depth $2\ell-2$, and contains $\ell\cdots 21$ by induction. However, there is at least one entry of $\sigma$ which forms an inversion with $\sigma(i)$, and thus $\pi$ itself must contain $(\ell+1)\cdots 21$, as desired.
\end{proof}

It was shown in Albert, Atkinson, Bouvel, Ru\v{s}kuc and Vatter~\cite[Theorem 3.2]{albert:geometric-grid-:} that if $M$ is a forest, then $\Grid(M)$ is a \emph{geometric grid class}. In the same paper, two strong properties of geometric grid classes were established: they are defined by finitely many minimal forbidden permutations and they are strongly rational. We refer the reader to that paper for a comprehensive introduction to geometric grid classes. Our enumerative result follows from the following theorem proved in a subsequent paper.

\begin{theorem}[Albert, Ru\v{s}kuc, and Vatter~{\cite[Theorem 7.6]{albert:inflations-of-g:}}]
\label{thm-geom-inflate-enum}
The class $\C[\U]$ is strongly rational for all geometrically griddable classes $\C$ and strongly rational classes $\U$.
\end{theorem}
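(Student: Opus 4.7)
The plan is to exploit the regular-language encoding of geometric grid classes and combine it with the assumed rationality of every subclass of $\U$, in essence via a transfer-matrix argument. The goal is to show that any subclass $\D \subseteq \C[\U]$ has a rational generating function.

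First, I would recall that a geometric grid class $\Geom(M)$ admits a surjection from a regular language $L$ over a finite alphabet $\Sigma$ (one letter per nonzero cell of $M$) onto its permutations, and that $\C \subseteq \Geom(M)$ can be encoded by a regular sublanguage. The encoding is compatible with the subpermutation order in that containment among permutations lifts to a computable combinatorial relation between their encoding words.

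Next, given a subclass $\D \subseteq \C[\U]$ with a basis $B$ (it suffices to handle finitely generated subclasses and take appropriate limits), I would analyse how a basis element $\beta \in B$ can embed into a typical element $\pi = \sigma[\alpha_1,\dots,\alpha_m] \in \C[\U]$. Any such embedding factors as $\beta = \tau[\beta_1,\dots,\beta_r]$, where $\tau$ is a pattern of $\sigma$ at specified positions and each $\beta_i$ is a pattern of the corresponding $\alpha_j$. Since $|\beta|$ is bounded, this produces only finitely many combinatorial ``shapes'' to forbid. Avoiding all of them translates, for each choice of skeleton in $\sigma$, into the condition that certain of the inflations $\alpha_j$ must avoid a specified finite family of patterns from $\U$ -- that is, each $\alpha_j$ must lie in a prescribed subclass of $\U$, and by hypothesis each such subclass has a rational generating function.

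Finally, I would assemble everything via a deterministic automaton which reads the encoding word of $\sigma$ letter by letter, maintaining in its state a record of which partial embeddings of basis elements have been witnessed so far. At each transition, the automaton multiplies in the generating function of the subclass of $\U$ from which the current cell's inflation is allowed to be drawn, given the current state. The transfer matrix of this automaton has rational-function entries, so the generating function of $\D$ -- being the sum over accepting runs of products of these entries -- is itself rational. The main obstacle will be keeping the state space of this automaton finite while tracking all in-progress partial embeddings of basis elements across multiple cells simultaneously; forbidden patterns of $\D$ may span many inflations, so one has to argue that only boundedly many such partial embeddings need to be remembered at once, an argument which should follow from the bounded size of $B$ together with the finite cell alphabet of $M$, in the spirit of the encoding techniques of~\cite{albert:geometric-grid-:}.
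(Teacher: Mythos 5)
First, a point of order: this paper never proves Theorem~\ref{thm-geom-inflate-enum} at all --- it is imported wholesale from \cite{albert:inflations-of-g:} (Theorem 7.6 there) and used as a black box, so the only meaningful comparison is with the proof in that reference. Your sketch does reproduce its broad architecture: the regular-language encoding of geometric grid classes from \cite{albert:geometric-grid-:}, the reduction of pattern containment in an inflation $\sigma[\alpha_1,\dots,\alpha_m]$ to finite data about the $\alpha_i$ (formalised there via query-complete sets and profiles), and a finite-state counting argument.

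However, three of your steps have genuine gaps, and they sit exactly where the cited proof does its real work. (i) ``Handle finitely generated subclasses and take appropriate limits'' is not a valid reduction: rationality is not preserved under coefficientwise limits (every power series is a limit of polynomials). What is actually needed is that $\C[\U]$ is well-quasi-ordered --- the companion wqo theorem of \cite{albert:inflations-of-g:}, of which Theorem~\ref{thm-geom-simples-pwo-basis} here is a specialisation, applicable because strongly rational classes are wqo (see Section~\ref{sec-enum}) --- so that an arbitrary subclass $\D\subseteq\C[\U]$ automatically has a \emph{finite} relative basis $B$ with $\D=\C[\U]\cap\Av(B)$; no limiting argument is available or required. (ii) Your transfer-matrix sum counts accepting runs, not permutations. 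An element of $\C[\U]$ typically has many representations $\sigma[\alpha_1,\dots,\alpha_m]$ (already $123=12[1,12]=12[12,1]$), and each underlying $\sigma$ has many encoding words; unless one fixes a canonical representation and proves that the language of canonical encodings is itself regular --- a substantial portion of \cite{albert:geometric-grid-:} and \cite{albert:inflations-of-g:} --- the sum over runs overcounts and does not compute the generating function of $\D$. (iii) The clause ``the subclass of $\U$ from which the current cell's inflation is allowed to be drawn, given the current state'' cannot be right as stated: whether a given $\alpha_j$ is admissible is not determined by the prefix read so far, since avoidance of a basis element is a joint condition on all embeddings, involving letters not yet read. The standard repair is to decorate each letter with the \emph{profile} of its inflation (recording which of the finitely many relevant subpatterns of basis elements it contains), to show that the language of admissible decorated words is regular, and then to substitute for each profile the generating function of the corresponding profile set, which is rational by inclusion--exclusion from the strong rationality of $\U$. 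That mechanism is precisely the query-complete-set machinery of the cited proof, and it also dissolves the finiteness worry in your last paragraph: the automaton need only remember which subsets of entries of the finitely many, bounded-size basis elements have been matched so far.
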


Applying induction on the height of substitution decomposition trees (which are bounded in all of our classes by Proposition~\ref{prop-bdd-subst-depth}), we immediately obtain the following result.

\begin{theorem}
For every $\ell$, the class $\Av(24153, 31524, \ell\cdots 21)$ is strongly rational.
\end{theorem}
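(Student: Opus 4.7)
The plan is to stratify $\C = \Av(24153, 31524, \ell\cdots 21)$ by substitution depth and induct. Let $\C^{(k)}$ denote the set of permutations in $\C$ whose substitution decomposition tree has height at most $k$. By Proposition~\ref{prop-bdd-subst-depth} we have $\C = \C^{(2\ell-3)}$, so it suffices to show by induction on $k$ that each $\C^{(k)}$ is strongly rational. Since strong rationality is inherited by subclasses (a rational generating function for every subclass is exactly the definition), it will then follow that $\C$ itself is strongly rational.

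For the base case $k=1$, the class $\C^{(1)}$ consists of the simple permutations in $\C$ together with the singletons and monotone permutations. By Proposition~\ref{prop-gridding-corners}, the simple permutations in $\C$ are contained in $\Grid(M)$ for a finite $0/1$ matrix $M$ whose cell graph is a forest (this is the same reasoning used just before Theorem~\ref{thm-P5}); by enlarging $M$ slightly, we may assume its grid class also contains the monotone permutations. Invoking the result of Albert, Atkinson, Bouvel, Ru\v{s}kuc and Vatter cited before Theorem~\ref{thm-geom-inflate-enum}, this forest grid class is a geometric grid class, and such classes are strongly rational. Hence $\C^{(1)}$ is strongly rational.

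For the inductive step, assume $\C^{(k)}$ is strongly rational. Every permutation $\pi \in \C^{(k+1)}$ can be written as $\pi = \sigma[\alpha_1, \dots, \alpha_m]$ where $\sigma$ is the root label of the substitution tree (simple, or monotone of length $\ge 2$) and each $\alpha_i$ has substitution tree of height at most $k$. Since $\sigma$ occurs as a subpattern of $\pi$ (take one entry from each inflated block), we have $\sigma \in \C \cap \C^{(1)}$, and each $\alpha_i \le \pi$ lies in $\C^{(k)}$. Therefore
\[
\C^{(k+1)} \subseteq \C^{(1)}[\C^{(k)}].
\]
Since $\C^{(1)}$ is geometrically griddable (as in the base case) and $\C^{(k)}$ is strongly rational by inductive hypothesis, Theorem~\ref{thm-geom-inflate-enum} gives that $\C^{(1)}[\C^{(k)}]$ is strongly rational, and hence so is the subclass $\C^{(k+1)}$. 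This closes the induction.

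There is no serious obstacle: the argument is essentially a clean assembly of the structural result (Proposition~\ref{prop-gridding-corners}), the depth bound (Proposition~\ref{prop-bdd-subst-depth}), and the inflation-preservation theorem (Theorem~\ref{thm-geom-inflate-enum}). The only point requiring slight attention is verifying that the simple-permutation gridding from Proposition~\ref{prop-gridding-corners} actually yields a geometric grid class, which it does because forest $0/1$ grid classes are geometric, and that the base case $\C^{(1)}$ encompasses simples, points and monotones simultaneously inside one such geometric grid class.
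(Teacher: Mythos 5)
Your proposal follows the paper's own route---stratify by substitution depth (Proposition~\ref{prop-bdd-subst-depth}), place the simple permutations in a forest $0/1$ grid class (Proposition~\ref{prop-gridding-corners}), note that forest grid classes are geometric and strongly rational, and iterate Theorem~\ref{thm-geom-inflate-enum}---but one step does not parse as written. The sets $\C^{(k)}$ are not permutation classes, because substitution depth is not monotone under containment: $2413$ is simple, hence of depth $1$, and lies in $\C$, yet it contains $231=12\ominus 1$, which has depth $2$. So $2413\in\C^{(1)}$ while $231\notin\C^{(1)}$, and $\C^{(1)}$ fails to be downward closed (the same happens for the higher strata). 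Consequently ``$\C^{(k)}$ is strongly rational'' is not a well-formed assertion---strong rationality, as defined in the paper, is a property of classes, quantified over subclasses---and Theorem~\ref{thm-geom-inflate-enum} cannot be applied verbatim to $\C^{(1)}[\C^{(k)}]$, since its hypotheses require a geometrically griddable \emph{class} inflated by a strongly rational \emph{class}. For the same reason, $\C^{(k+1)}$ is merely a subset, not a subclass, of $\C^{(1)}[\C^{(k)}]$, so the inheritance step closing your induction also needs care. (Note that the subtle point you flagged at the end---forest grid classes being geometric---is fine; this one is the real issue.)

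The repair is routine, and once made your argument is exactly the one the paper intends. Let $M'$ be the forest matrix obtained from Proposition~\ref{prop-gridding-corners} after adjoining disjoint monotone cells so that $\Grid(M')$ contains all monotone members of $\C$; this is a geometric grid class, hence strongly rational. Define genuine classes $\D_1=\Grid(M')$ and $\D_{k+1}=\D_1[\D_k]$, recalling that the inflation of a class by a class is again a class, and noting these classes are nested since $1\in\D_1$. Your root-decomposition argument then shows, by induction on $k$, that every $\pi\in\C$ of substitution depth at most $k$ lies in $\D_k$: the root $\sigma$ lies in $\Grid(M')$ and each block $\alpha_i$ lies in $\C$ with depth at most $k-1$. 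Hence $\C=\C^{(2\ell-3)}\subseteq\D_{2\ell-3}$ by Proposition~\ref{prop-bdd-subst-depth}. Each $\D_k$ is strongly rational by Theorem~\ref{thm-geom-inflate-enum} and induction, and $\C$ is a subclass of $\D_{2\ell-3}$, hence strongly rational. (Equivalently, you could keep your stratification but replace each $\C^{(k)}$ by its downward closure.)
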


\section{Concluding Remarks}

As shown in Figure~\ref{fig-wqo-results}, there are only three cases remaining: permutation graphs avoiding $\{P_6,K_5\}$, $\{P_6,K_4\}$ and $\{P_7,K_4\}$. Due to the absence of an ``obvious'' infinite antichain in these cases, we conjecture that they are all wqo. However, all three classes contain (for example) the generalised grid class $\Grid\fnmatrix{rr}{\bigoplus 21&\Av(21)}$, so these classes all contain simple permutations which are not monotone griddable. Thus our approach would have to be significantly changed to approach this conjecture.

A natural first step might be to show that the corresponding permutation classes are $\D$-griddable (in the sense of Section~\ref{sec-p7}) for a ``nice'' class $\D$. As any proof along these lines would need to use some sort of slicing argument as well (such as we used to prove Proposition~\ref{prop-gridding-corners}), $\D$ would have to be nice enough to allow for such arguments. Vatter~\cite[Lemma 5.3]{vatter:small-permutati:} has shown that such slicing arguments can be made to work when $\D$ has only finitely many simple permutations and finite substitution depth (in the sense of Section~\ref{sec-enum}). As we already have finite substitution depth by Proposition~\ref{prop-bdd-subst-depth}, it would suffice to show that we could take $\D$ to contain only finitely many simple permutations.

Again, though, this would only be an encouraging sign and not a proof, because one would then have to develop more sophisticated tools to prove wqo for such classes. Specifically, it is unlikely that the existing machinery of Brignall~\cite{brignall:grid-classes-an:} would suffice.


\def\cprime{$'$}
\begin{thebibliography}{10}

\bibitem{albert:simple-permutat:}
{\sc Albert, M.~H., and Atkinson, M.~D.}
\newblock Simple permutations and pattern restricted permutations.
\newblock {\em Discrete Math. 300}, 1-3 (2005), 1--15.

\bibitem{albert:geometric-grid-:}
{\sc Albert, M.~H., Atkinson, M.~D., Bouvel, M., Ru{\v{s}}kuc, N., and Vatter,
  V.}
\newblock Geometric grid classes of permutations.
\newblock {\em Trans. Amer. Math. Soc. 365\/} (2013), 5859--5881.

\bibitem{albert:subclasses-of-t:}
{\sc Albert, M.~H., Atkinson, M.~D., and Vatter, V.}
\newblock Subclasses of the separable permutations.
\newblock {\em Bull. Lond. Math. Soc. 43\/} (2011), 859--870.

\bibitem{albert:inflations-of-g:}
{\sc Albert, M.~H., Ru{\v{s}}kuc, N., and Vatter, V.}
\newblock Inflations of geometric grid classes of permutations.
\newblock {\em Israel J. Math. 205}, 1 (2015), 73--108.

\bibitem{albert:generating-and-:}
{\sc Albert, M.~H., and Vatter, V.}
\newblock Generating and enumerating $321$-avoiding and skew-merged simple
  permutations.
\newblock {\em Electron. J. Combin. 20 (2)\/} (2013), Research paper 44, 11 pp.

\bibitem{atminas:labelled-induced:}
{\sc Atminas, A., and Lozin, V.}
\newblock Labelled induced subgraphs and well-quasi-ordering.
\newblock {\em Order\/} (2014), 1--16.

\bibitem{brignall:grid-classes-an:}
{\sc Brignall, R.}
\newblock Grid classes and partial well order.
\newblock {\em J. Combin. Theory Ser. A 119\/} (2012), 99--116.

\bibitem{ding:subgraphs-and-w:}
{\sc Ding, G.}
\newblock Subgraphs and well-quasi-ordering.
\newblock {\em J. Graph Theory 16}, 5 (1992), 489--502.

\bibitem{gallai:transitiv-orien:}
{\sc Gallai, T.}
\newblock Transitiv orientierbare {G}raphen.
\newblock {\em Acta Math. Acad. Sci. Hungar 18\/} (1967), 25--66.

\bibitem{gyarfas:a-helly-type-pr:}
{\sc Gy{\'a}rf{\'a}s, A., and Lehel, J.}
\newblock A {H}elly-type problem in trees.
\newblock In {\em Combinatorial theory and its applications, {II} ({P}roc.
  {C}olloq., {B}alatonf\"ured, 1969)}. North-Holland, Amsterdam, 1970,
  pp.~571--584.

\bibitem{higman:ordering-by-div:}
{\sc Higman, G.}
\newblock Ordering by divisibility in abstract algebras.
\newblock {\em Proc. London Math. Soc. (3) 2\/} (1952), 326--336.

\bibitem{huczynska:grid-classes-an:}
{\sc Huczynska, S., and Vatter, V.}
\newblock Grid classes and the {F}ibonacci dichotomy for restricted
  permutations.
\newblock {\em Electron. J. Combin. 13\/} (2006), R54, 14 pp.

\bibitem{korpelainen:two-forbidden-i:}
{\sc Korpelainen, N., and Lozin, V.}
\newblock Two forbidden induced subgraphs and well-quasi-ordering.
\newblock {\em Discrete Math. 311}, 16 (2011), 1813--1822.

\bibitem{korpelainen:bipartite-induc:}
{\sc Korpelainen, N., and Lozin, V.~V.}
\newblock Bipartite induced subgraphs and well-quasi-ordering.
\newblock {\em J. Graph Theory 67}, 3 (2011), 235--249.

\bibitem{murphy:restricted-perm:}
{\sc Murphy, M.~M.}
\newblock {\em Restricted Permutations, Antichains, Atomic Classes, and Stack
  Sorting}.
\newblock PhD thesis, Univ. of St Andrews, 2002.

\bibitem{murphy:profile-classes:}
{\sc Murphy, M.~M., and Vatter, V.}
\newblock Profile classes and partial well-order for permutations.
\newblock {\em Electron. J. Combin. 9}, 2 (2003), Research paper 17, 30 pp.

\bibitem{robertson:graph-minors-i-xx:}
{\sc Robertson, N., and Seymour, P.}
\newblock Graph minors {I}--{XX}.
\newblock {\em J. Combinatorial Theory Ser. B\/} (1983--2004).

\bibitem{vatter:small-permutati:}
{\sc Vatter, V.}
\newblock Small permutation classes.
\newblock {\em Proc. Lond. Math. Soc. (3) 103\/} (2011), 879--921.

\bibitem{vatter:on-partial-well:}
{\sc Vatter, V., and Waton, S.}
\newblock On partial well-order for monotone grid classes of permutations.
\newblock {\em Order 28\/} (2011), 193--199.

\end{thebibliography}
\def\cprime{$'$}

\end{document}